\newtheorem{thm}{Theorem}[section]
\newtheorem{cor}[thm]{Corollary}
\theoremstyle{definition}
\theoremstyle{remark}
\def\Rn{{\mathbb R}^n}
\def\R2n{{\mathbb R}^{2n}}
\def\Rn{{\mathbb R}^n}
\def\R^2{{\mathbb R}^2}
\def\R2n{{\mathbb R}^{2n}}
\def\R{{\mathbb R}}
\title[Pseudo-differential Operators on the Abstract Heisenberg Group]{Hilbert-Schmidt and Trace Class Pseudo-differential Operators on the Abstract Heisenberg Group}
\author{Aparajita Dasgupta}
\address{Aparajita Dasgupta \endgraf Department of Mathematics \endgraf Indian Institute of Technology Delhi \endgraf Hauz Khas, New Delhi - 110 016, India.}
\email{adasgupta@maths.iitd.ac.in}
\author{Vishvesh Kumar} 
\address{Vishvesh Kumar \endgraf School of Mathematical Sciences \endgraf National Institute of Science Education and Research Bhubaneshwar, HBNI  \endgraf At/Po- Jatni, District- Khurda, Odisha- 7520250,  India.} 
\email{vishveshmishra@gmail.com}
\begin{document}
\begin{abstract} 
In this paper we introduce and study pseudo-differential operators with operator valued symbols on the abstract Heisenberg group $\mathbb{H}(G):=G \times \widehat{G} \times \mathbb{T},$ where $G$ a locally compact abelian group with its dual group $\widehat{G}$. We obtain a necessary and sufficient condition on symbols for which these operators are in the class of Hilbert-Schmidt operators. As a key step in proving this we derive a trace formula for the trace class $j$-Weyl transform, $j \in \mathbb{Z}^*$ with symbols in $L^{2}(G\times \widehat{G}).$ We go on to present a characterization of the trace class pseudo-differential operators on $\mathbb{H}(G)$. Finally, we also give a trace formula for these trace class operators.
\end{abstract}

\keywords{Pseudo-differential operators, $j$-Weyl transform, Hilbert Schmidt operators, Trace class operators, Abstract Heisenberg group}
\subjclass[2010]{Primary 35S05, 47G30; Secondary 43A85, 43A77}

\maketitle

\tableofcontents

\section{Introduction} The aim of this paper is to look at the pseudo-differential operators on the abstract Heisenberg group $\mathbb{H}(G)$, for locally compact abelian group $G.$ In 1964, A. Weil \cite{Wei} studied certain group of unitary operators associated with a locally compact abelian group in connection with the study of the celebrated work of Seigel on quadratic form. In \cite{Wei}, the author introduced this group where  he considered $G$ to be an ad$\grave{\text{e}}$le group or the additive group of vector space over a local field, which has applications in number theory.  Recently Radha, et al. \cite{Ra, Shr} studied Weyl multipliers and shift invariant spaces on the group $\mathbb{H}(G).$ It is a well-known fact from \cite{Wong} that pseudo-differential operators on $\Rn$ are based on the Plancherel formula for the Fourier transform on $\Rn$. The Plancherel formula gives rise to the Fourier inversion formula, which says that the identity
operator for $L^2(\Rn)$ can be expressed in terms of the Fourier transform on $\Rn$. The Fourier inversion formula, albeit useful in many situations, gives a perfect symmetry, namely, the identity operator.
 The classical pseudo-differential operator $T_{\sigma}$ associated to a symbol $\sigma,$ (a measurable function on $\mathbb{R}^{n}\times\mathbb{R}^n$) is defined by
\begin{equation}\label{clpdodef}
\left(T_{\sigma}\phi\right)(x)=(2\pi)^{-n/2}\int_{\mathbb{R}^n}e^{ix\xi}\sigma(x,\xi)\widehat{\phi}(\xi)d\xi,~~~x\in\mathbb{R}^{n},
\end{equation}
for all $\phi $ in the Schwartz space $S(\mathbb{R}^n)$ on $\mathbb{R}^n,$ provided the integral exists. The function $\widehat{\phi}$ in \eqref{clpdodef} is the Fourier transform of $\phi$ defined by
$$\widehat{\phi}(\xi)=(2\pi)^{-n/2}\int_{\mathbb{R}^n}e^{-ix\cdot\xi}\phi(x)dx,~~~\xi\in\mathbb{R}^n.$$
The  formation of a pseudo-differential operator as define in \eqref{clpdodef} is mainly based on the Fourier inversion formula given by,
$$\phi(x)=(2\pi)^{-n/2}\int_{\mathbb{R}^n}e^{ix\cdot\xi}\widehat{\phi}(\xi)d\xi,~~~x\in\mathbb{R}^n,$$ for all $\phi$ in $S(\mathbb{R}^n).$ By inserting a symbol, which is a suitable function on the phase space $\Rn\times\Rn,$ we break the symmetry and obtain the pseudo-differential operator. To extend pseudo-differential  operators to other settings, we first observe that $\Rn$ is a group and its dual is  also $\mathbb{R}^n.$ It is then natural to extend pseudo-differential operators to other groups which have explicit dual objects and Fourier inversion formulas. Recently, such works have been done for $\mathbb{S}^1,~~\mathbb{Z},$ finite abelian groups, locally compact abelian groups, affine groups, compact groups, homogeneous spaces of compact groups, Heisenberg group and on general locally compact type I groups \cite{Dasgupta, DW, Fis, Kumar, VW, Mol, Mol1, RuV} among others. \\
A basic result in the theory of pseudo-differential operators on $\Rn$ is that if $\sigma\in L^{2}(\Rn\times\Rn),$ then $T_{\sigma}$ can be extended to a bounded linear operator from $L^{2}(\Rn)$ into $L^{2}(\Rn).$ Moreover the resulting bounded linear operator is in Hilbert-Schimdt class as explained in \cite{Resi, Wo}. Recently in \cite{DW}, Dasgupta and Wong has obtained a necessary and sufficient conditions on the symbols for which the pseudo-differential operators on Heisenberg groups are in Hilbert-Schmidt class.  Motivated by this we wish to study the boundedness property of the pseudo-differential operators on $\mathbb{H}(G)$ and obtain conditions on the symbols for  which these operators are in Hilbert-Schimdt class.  In Section \ref{pre}, we recall the basics of the abstract Heisenberg group,$\mathbb{H}(G)$,  and define the pseudo-differential operators on the group $\mathbb{H}({G}),$ where $G$ is a locally compact abelian group. The $L^{2}$ boundedness property  of the pseudo-differential operators on $\mathbb{H}(G)$ is given in Section \ref{L2bdd}. In Section \ref{trWe} we obtained the trace formula of a trace class $j$-Weyl transform, $j \in \mathbb{Z}^*$ associated to the symbol in $L^{2}(G\times\widehat{G})$ , where $G$ is a locally compact abelian group and this result is the key step in obtaining the necessary and sufficient conditions for the pseudo-differential operators on the abstract Heisenberg group, $\mathbb{H}(G)$ to be in Hilbert-Schmidt class, which we proved in Section \ref{hilbdd}. In Section \ref{TrOp} trace class pseudo-differential operators on $\mathbb{H}(G)$ are given and a trace formula is obtained for them.

\section{Preliminaries}\label{pre}
Let $G$ be a locally compact abelian group such that the map $x \mapsto jx,$ $j \in \mathbb{Z}^{\ast}=\mathbb{Z}\setminus\{0\},$ is an automorphism of $G$. We denote the abstract Heisenberg group associated with $G$ by $\mathbb{H}(G):= G \times \widehat{G}\times \mathbb{T}$ with the group operation given by 
$$(x, \chi, \theta) (x', \chi', \theta)= (xx', \chi \chi', \theta \theta' \chi'(x)).$$
The unitary dual $\widehat{\mathbb{H}(G)} $ of $\mathbb{H}(G)$ can be identified with $\mathbb{Z}^{\ast},$  the set of non-zero integers, as follows: for any $j \in \mathbb{Z}^*,$ the irreducible representation of $\mathbb{H}(G)$ on $L^2(G)$ is given by 
$$(\rho_j(x, \chi, \theta) \varphi)(y)= \theta^j(\chi(y))^j \varphi(xy)\,\,\,\,\, \text{for}\,\, \varphi \in L^2(G).$$ It is well known from Stone-von Neumann theorem that every infinite dimensional irreducible unitary representation of $\mathbb{H}(G)$ is unitarily equivalent to $\rho_j,$ $j\in \mathbb{Z}^*.$ For further details we refer Folland \cite{Fo1, Fo2}.

For each $j \in \mathbb{Z}^*,$ the Fourier transform, $\widehat{f}$ of $f \in L^1(\mathbb{H}(G))$ at $j$  is an operator $\widehat{f}(j)$ on $L^{2}(G)$ given  by 
$$\widehat{f}(j) \psi:= \int_{G \times \widehat{G} \times \mathbb{T}} f(x, \chi, \theta) \, \rho_j(x, \chi, \theta)\psi \, d\mu_G\, d\mu_{\widehat{G}}\, d\mu_{\mathbb{T}},$$ where $d\mu_G\, d\mu_{\widehat{G}}\, d\mu_{\mathbb{T}}$ is a product of the Haar measure on $G,$ $\widehat{G}$ and $\mathbb{T}.$ The above integral is a Bochner  integral taking values in the Hilbert space $L^{2}(G).$ This operator $\widehat{f}(j)$ is a bounded operator on $L^{2}(G)$ with $||\widehat{f}(j)||_{\mathcal{B}(L^2(G))}\leq ||f||_{L^{1}(G)}.$ The inverse Fourier transform, $f^{j},$ $j\in\mathbb{Z}^{\ast},$ of $f\in L^{1}(\mathbb{H}(G))$ in the $\theta$ variable is given by
\begin{equation}\label{InFT}
f^{j}(x,y)=\int_{\mathbb{T}}f(x,\gamma,\theta)\theta^{j}d\mu_{\mathbb{T}}(\theta). 
\end{equation} 

Then $f^{j}\in L^{1}(G\times\widehat{G})$ and using \eqref{InFT}, $\widehat{f}$ can be written as
\begin{equation}\label{FT}
\widehat{f}(j)\phi=\int_{G\times \widehat{G}}f^{j}(x,\gamma)\rho_{j}(x,\gamma,1)\phi  d\mu_G\, d\mu_{\widehat{G}}, ~~~\phi\in L^{2}(G).
\end{equation}
Further the Fourier transform $\widehat{f}$ of $f\in L^{2}(G)$ satisfies 
$$||\widehat{f}(j)||_{\mathcal{B}_{2}(L^2(G))}=(C_{j,G})^{-1}||f^{j}||_{L^{2}(G\times\widehat{G})}^{2},$$ where  $C_{j,G}$ is a constant depending on $j$ and $G$ such that $C_{j,G} d\mu_G(j^{-1}x)=d\mu_G(x),$ for $j \in \mathbb{Z}^*$ and $C_{0,G}$ is taken be to zero. \\
Since the group is $\mathbb{H}(G)=G\times \widehat{G}\times\mathbb{T}$ so to obtain the Plancheral theorem we need to consider the one-dimensional representations of $\mathbb{H}(G)$ in addition to the infinite dimensional irreducible unitary representations. Then the Plancherel formula for $f \in L^2(\mathbb{H}(G))$ is given by
\begin{equation}\label{Plth}
\|f\|^2_{L^2(\mathbb{H}(G))} = \|\widehat{f}\|^2_{\ell^2(\mathbb{Z}^*, S_2, C_{j, G})}+ \int_{\widehat{G}} \int_G |\widehat{(f^0)}(\gamma, y)|^2 \, d\mu_G(y)\, d\mu_{\widehat{G}}(\gamma),
\end{equation}
where $f_0(x, \chi)= \int_{\mathbb{T}} f(x, \chi, \theta)\, d\mu_{\mathbb{T}}(\theta)$. However for the case when $f-f^{0},$ $f\in L^{2}(\mathbb{H}(G))$ has mean value zero in the central variable, the Plancheral formula reduces to the following,
\begin{equation}\label{nePl}
||\widehat{f}||_{l^{2}(\mathbb{Z}\setminus\{0\}, \mathcal{B}(L^{2}(G); C_{j,G})}=||f-f^{0}||_{L^{2}(\mathbb{H}(G))}.
\end{equation}
 In other words, we can say there is an isometry from $L^{2}(\mathbb{H}(G))$ into $l^{2}(\mathbb{Z}\setminus\{0\}, \mathcal{B}(L^{2}(G); C_{j,G}),$ where $l^{2}(\mathbb{Z}\setminus\{0\}, \mathcal{B}(L^{2}(G); C_{j,G})$ denotes the space of all sequences indexed in $\mathbb{Z}^{\ast},$ taking values in $\mathcal{B}(L^{2}(G)$ and square summable with weight $C_{j,G}.$ See \cite{Ra,Wei} for more details.   

The following Fourier Inversion formula for the Fourier transform on the abstract Heisenberg group is the starting point for the analysis of the pseudo-differential operators on the abstract Heisenberg group.\\
For a function with compact support, $f\in \mathcal{C}_{c}(\mathbb{H}(G)),$  the Fourier Inversion formula is given by,
\begin{equation}\label{IFT}
f(x,\chi,\theta)=\sum_{j \in \mathbb{Z}^*} tr( \rho_j^*(x, \chi, \theta) \widehat{f}(j)),
\end{equation}

where $\rho_j^{\ast}(x, \chi, \theta)$ is the adjoint of $\rho_j(x, \chi, \theta).$\\
Now let $\mathbb{B}(L^{2}(G))$ be the $C^{\ast}$-algebra of bounded linear operators on $L^{2}(G).$ Then we call the mapping $\sigma: \mathbb{H}(G)\times \mathbb{Z}^{\ast}\rightarrow B(L^{2}(G))$ an operator valued symbol or simply a symbol. 

We define a pseudo-differential operator $T_\sigma$ associated with the operator-valued symbol $\sigma: \mathbb{H}(G) \times \mathbb{Z}^* \rightarrow \mathcal{B}(L^2(G))$ as 

$$(T_\sigma f)(x, \chi, \theta):= \sum_{j \in \mathbb{Z}^*} tr( \rho_j^*(x, \chi, \theta) \sigma(x, \chi, \theta, j) \widehat{f}(j)) \,\,\,\,\, \forall f \in C_c(\mathbb{H}(G)).$$
 
\section{$L^2$-boundedness } \label{L2bdd}
In this section we prove $L^2$- boundedness of pseudo-differential operators on $\mathbb{H}(G).$ We also show that if two symbols with some conditions give arise to same pseudo-differential operator then symbol must be same. Before stating our results of this section we would like to recall some notation from operator theory. 

Let $X$ be a complex and separable Hilbert space with the inner product denoted by $(\cdot, \cdot)$ and let  $\{\psi_{k}, k=1,2,..\}$ be an orthonormal basis for $X.$ 
 
 An operator $T \in \mathcal{B}(X)$ is a Hilbert-Schmidt operator if for any (hence all) orthonormal basis $\{\psi_k\}_{k=1}^\infty$ of $X$ we have $\sum_k \|T\psi_k\|_{X}< \infty.$ The set of Hilbert-Schmidt operators, denoted by $S_2(X),$ is two sided ideal of $\mathcal{B}(X).$ At times we also denote $S_2(X)$ only by $S_2.$ The Hilbert-Schmidt norm on $S_2$ is given by $$\|T\|_{HS}= \left(\sum_{k=1}^\infty \|T\psi_k\|_{X}^2 \right)^\frac{1}{2}.$$ 
 
Now, we are ready to state the following theorem on $L^2$-boundedness of pseudo-differential operators on $\mathbb{H}(G). $

\begin{thm}
	Let $\sigma: \mathbb{H}(G) \times \mathbb{Z}^* \rightarrow S_2$ be a operator-valued symbol such that 
	$$\sum_{j \in \mathbb{Z}^*} \int_{G \times \widehat{G} \times \mathbb{T}} C_{j, G}^{-1}\, \|\sigma(x, \chi, \theta, j)\|^2_{S_2}\, d\mu_G\, d\mu_{\widehat{G}} \, d\mu_{\mathbb{T}} < \infty.$$ 
	Then the pseudo-differential operator $T_\sigma:L^2(\mathbb{H}(G)) \rightarrow L^2(\mathbb{H}(G))$ is a bounded operator.
\end{thm}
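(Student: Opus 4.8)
The plan is to prove the operator bound $\|T_\sigma f\|_{L^2(\mathbb{H}(G))} \le M\,\|f\|_{L^2(\mathbb{H}(G))}$ for all $f$ in the dense subspace $C_c(\mathbb{H}(G))$ on which $T_\sigma$ is defined, with $M^2$ equal to the quantity assumed finite in the hypothesis, and then to extend $T_\sigma$ to a bounded operator on all of $L^2(\mathbb{H}(G))$ by continuity. The whole argument rests on a pointwise-in-$(x,\chi,\theta)$ bound for the defining series combined with the Plancherel formula \eqref{Plth}.

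First I would fix $(x,\chi,\theta)$ and control a single summand $tr(\rho_j^*(x,\chi,\theta)\,\sigma(x,\chi,\theta,j)\,\widehat{f}(j))$. Since $f\in L^2(\mathbb{H}(G))$, the Plancherel formula ensures each $\widehat{f}(j)$ is Hilbert-Schmidt, while $\sigma(x,\chi,\theta,j)\in S_2$ by assumption; hence the product is trace class and the trace is meaningful. Grouping the factors as $A=\rho_j^*(x,\chi,\theta)\sigma(x,\chi,\theta,j)$ and $B=\widehat{f}(j)$, the inequality $|tr(AB)|\le \|A\|_{S_2}\|B\|_{S_2}$ together with the fact that the unitary $\rho_j^*(x,\chi,\theta)$ preserves the Hilbert-Schmidt norm gives
$$|tr(\rho_j^*(x,\chi,\theta)\,\sigma(x,\chi,\theta,j)\,\widehat{f}(j))|\le \|\sigma(x,\chi,\theta,j)\|_{S_2}\,\|\widehat{f}(j)\|_{S_2}.$$
Summing over $j$ and inserting the compensating factors $C_{j,G}^{-1/2}$ and $C_{j,G}^{1/2}$ before applying the Cauchy-Schwarz inequality in $\ell^2(\mathbb{Z}^*)$, I obtain
$$|(T_\sigma f)(x,\chi,\theta)|\le \Big(\sum_{j\in\mathbb{Z}^*}C_{j,G}^{-1}\|\sigma(x,\chi,\theta,j)\|_{S_2}^2\Big)^{1/2}\Big(\sum_{j\in\mathbb{Z}^*}C_{j,G}\|\widehat{f}(j)\|_{S_2}^2\Big)^{1/2}.$$
The second factor is precisely $\|\widehat{f}\|_{\ell^2(\mathbb{Z}^*,S_2,C_{j,G})}$, which by \eqref{Plth} is at most $\|f\|_{L^2(\mathbb{H}(G))}$, since the remaining term in the Plancherel identity is nonnegative.

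To finish, I would square this pointwise bound and integrate over $G\times\widehat{G}\times\mathbb{T}$; the Plancherel factor is a constant independent of $(x,\chi,\theta)$ and so factors out, leaving
$$\|T_\sigma f\|_{L^2(\mathbb{H}(G))}^2\le \|f\|_{L^2(\mathbb{H}(G))}^2\sum_{j\in\mathbb{Z}^*}\int_{G\times\widehat{G}\times\mathbb{T}}C_{j,G}^{-1}\|\sigma(x,\chi,\theta,j)\|_{S_2}^2\,d\mu_G\,d\mu_{\widehat{G}}\,d\mu_{\mathbb{T}},$$
where the interchange of sum and integral is justified by Tonelli's theorem on the nonnegative integrand, and the remaining factor is finite by hypothesis. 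I expect the main point requiring care to be the justification that the series defining $T_\sigma f$ converges absolutely for each fixed $(x,\chi,\theta)$ and yields a genuine (measurable, square-integrable) function rather than a merely formal object; this is exactly what the pointwise Cauchy-Schwarz bound above supplies, the dominating function being integrable in $(x,\chi,\theta)$. The measurability of $T_\sigma f$ and the final extension from $C_c(\mathbb{H}(G))$ to $L^2(\mathbb{H}(G))$ by density are then routine.
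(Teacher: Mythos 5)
Your proof is correct and is essentially the paper's argument: both rest on the trace inequality $|\mathrm{tr}(AB)|\le\|A\|_{S_2}\|B\|_{S_2}$ together with the unitarity of $\rho_j^{*}(x,\chi,\theta)$, the weighted Cauchy--Schwarz inequality over $j\in\mathbb{Z}^{*}$ with the compensating factors $C_{j,G}^{\pm 1/2}$, and the Plancherel bound $\|\widehat{f}\|_{\ell^2(\mathbb{Z}^*, S_2, C_{j,G})}\le\|f\|_{L^2(\mathbb{H}(G))}$, and both arrive at exactly the same operator-norm bound. The only difference is the order of operations: the paper first applies Minkowski's integral inequality to pull the $j$-sum outside the $L^2$-norm and then Cauchy--Schwarz to the resulting numerical sum, whereas you apply Cauchy--Schwarz pointwise in $(x,\chi,\theta)$ and then integrate via Tonelli --- an immaterial rearrangement, with the minor bonus that your pointwise estimate makes the absolute convergence of the series defining $(T_\sigma f)(x,\chi,\theta)$ explicit.
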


\begin{proof}   Using Minkowski integral inequality and the estimate $\|\widehat{f}\|_{\ell^2(\mathbb{Z}^*, S_2, C_{j, G})} \leq \|f\|_{L^2(\mathbb{H}(G))}$ we have, for $f \in L^2(\mathbb{H}(G)),$
	\begin{align*} 
	\|T_\sigma f\|_{L^2(\mathbb{H}(G))}& = \left( \int_{G\times \widehat{G} \times \mathbb{T}} |(T_\sigma f)(x, \chi, \theta)|^2 d\mu_G(x) \, d\mu_{\widehat{G}}(\chi) \,d\mu_{\mathbb{T}}(\theta)\right)^{\frac{1}{2}} \\ & = \left( \int_{G\times \widehat{G} \times \mathbb{T}} \left| \sum_{j \in \mathbb{Z}^*} tr( \rho_j^*(x, \chi, \theta) \sigma(x, \chi, \theta, j) \widehat{f}(j))\right|^2 d\mu_G(x) \, d\mu_{\widehat{G}}(\chi) \,d\mu_{\mathbb{T}}(\theta)\right)^{\frac{1}{2}} \\ & \leq \sum_{j \in \mathbb{Z}^*} \left( \int_{G\times \widehat{G} \times \mathbb{T}} \left|  tr( \rho_j^*(x, \chi, \theta) \sigma(x, \chi, \theta, j) \widehat{f}(j))\right|^2 d\mu_G(x) \, d\mu_{\widehat{G}}(\chi) \,d\mu_{\mathbb{T}}(\theta)\right)^{\frac{1}{2}} \\ & \leq \sum_{j \in \mathbb{Z}^*} \left( \int_{G\times \widehat{G} \times \mathbb{T}} \|\sigma(x, \chi, \theta, j)\|_{S_2}^2\,\, \|\widehat{f}(j))\|_{S_2}^2  d\mu_G(x) \, d\mu_{\widehat{G}}(\chi) \,d\mu_{\mathbb{T}}(\theta)\right)^{\frac{1}{2}} \\&= \sum_{j \in \mathbb{Z}^*} \|\widehat{f}(j))\|_{S_2} \left( \int_{G\times \widehat{G} \times \mathbb{T}} \|\sigma(x, \chi, \theta, j)\|_{S_2}^2\,\,  d\mu_G(x) \, d\mu_{\widehat{G}}(\chi) \,d\mu_{\mathbb{T}}(\theta)\right)^{\frac{1}{2}} \\& \leq \left( \sum_{j \in \mathbb{Z}^*} C_{j,G} \|\widehat{f}(j))\|_{S_2}^2 \right)^{\frac{1}{2}} \times \\ & \left( \sum_{j \in \mathbb{Z}^*}   \int_{G\times \widehat{G} \times \mathbb{T}} C_{j,G}^{-1} \|\sigma(x, \chi, \theta, j)\|_{S_2}^2\,\,  d\mu_G(x) \, d\mu_{\widehat{G}}(\chi) \,d\mu_{\mathbb{T}}(\theta)\right)^{\frac{1}{2}} \\& = \|\widehat{f}\|_{\ell^2(\mathbb{Z}^*, S_2, C_{j, G})} \left( \sum_{j \in \mathbb{Z}^*}   \int_{G\times \widehat{G} \times \mathbb{T}} C_{j,G}^{-1} \|\sigma(x, \chi, \theta, j)\|_{S_2}^2\,\,  d\mu_G(x) \, d\mu_{\widehat{G}}(\chi) \,d\mu_{\mathbb{T}}(\theta)\right)^{\frac{1}{2}} \\& \leq \|f\|_{L^2(\mathbb{H}(G))} \left( \sum_{j \in \mathbb{Z}^*}   \int_{G\times \widehat{G} \times \mathbb{T}} C_{j,G}^{-1} \|\sigma(x, \chi, \theta, j)\|_{S_2}^2\,\,  d\mu_G(x) \, d\mu_{\widehat{G}}(\chi) \,d\mu_{\mathbb{T}}(\theta)\right)^{\frac{1}{2}}.   
	\end{align*}
	Therefore, 
	  $$\|T_\sigma\|_{\mathcal{B}(L^2(\mathbb{H}(G)))} \leq \left( \sum_{j \in \mathbb{Z}^*}   \int_{G\times \widehat{G} \times \mathbb{T}} C_{j,G}^{-1} \|\sigma(x, \chi, \theta, j)\|_{S_2}^2\,\,  d\mu_G(x) \, d\mu_{\widehat{G}}(\chi) \,d\mu_{\mathbb{T}}(\theta)\right)^{\frac{1}{2}} <\infty.$$
	  Hence, $T_\sigma$ is a bounded operator on $L^2(\mathbb{H}(G)).$
	\end{proof}
	In the next theorem we prove that two symbols giving the same pseudo-differential operators are equal.
\begin{thm} \label{thm3.2}
	Let $\sigma : \mathbb{H}(G) \times \mathbb{Z}^* \rightarrow S_2$ such that 
	\begin{equation} \label{1}
	\sum_{j \in \mathbb{Z}^*} \int_{G \times \widehat{G} \times \mathbb{T}} C_{j, G}^{-1}\, \|\sigma(x, \chi, \theta, j)\|^2_{S_2}\, d\mu_G\, d\mu_{\widehat{G}} \, d\mu_{\mathbb{T}} < \infty.
	\end{equation} 
	Furthermore, suppose that 
	\begin{equation} \label{2}
	\sum_{j \in \mathbb{Z}^*}  \|\sigma(x, \chi, \theta, j)\|_{S_2}<\infty, \,\,\,\,\,\,\,\,\, \sup_{x, \chi, \theta, j} \|\sigma(x, \chi, \theta, j)\|_{S_2} <\infty
	\end{equation}  and the mapping $(x, \chi, \theta, j) \mapsto \rho_j^*(x, \chi, \theta) \sigma(x, \chi, \theta, j)$ from $\mathbb{H}(G) \times \mathbb{Z}^*$ into $S_2$ is weakly continuous. Then $T_\sigma f= 0\,\,\, \forall f \in L^2(\mathbb{H}(G))$ if and only if $\sigma(x, \chi, \theta, j)=0$ for almost all $(x, \chi, \theta, j) \in \mathbb{H}(G) \times \mathbb{Z}^*.$
	\end{thm}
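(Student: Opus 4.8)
The forward implication is immediate: if $\sigma(x,\chi,\theta,j)=0$ for almost every $(x,\chi,\theta,j)$, then every summand in the defining series for $T_\sigma f$ vanishes, whence $T_\sigma f=0$ for all $f$. So the whole content lies in the converse, and the plan is to recover $\sigma$ frequency-by-frequency in $j$ by feeding $T_\sigma$ test functions that isolate a single value $j_0\in\mathbb{Z}^*$. The mechanism is that the inner Fourier transform \eqref{InFT} in the central variable diagonalizes the $j$-sum against the characters of $\mathbb{T}$.

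Concretely, I would fix $j_0\in\mathbb{Z}^*$ and $g\in C_c(G\times\widehat{G})$ and set $f(x,\chi,\theta):=g(x,\chi)\,\overline{\theta}^{\,j_0}$, which lies in $C_c(\mathbb{H}(G))$. Since $\overline{\theta}^{\,j_0}=\theta^{-j_0}$ on $\mathbb{T}$, orthogonality of the characters gives $f^{j}=\delta_{j,j_0}\,g$, so by \eqref{FT} we have $\widehat{f}(j)=\delta_{j,j_0}\,W_{j_0}(g)$, where $W_{j_0}(g):=\int_{G\times\widehat{G}}g(x',\gamma')\,\rho_{j_0}(x',\gamma',1)\,d\mu_G\,d\mu_{\widehat{G}}$ is the $j_0$-Weyl transform of $g$. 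Substituting this into the definition of $T_\sigma$ collapses the series to a single term,
$$(T_\sigma f)(x,\chi,\theta)=tr\left(\rho_{j_0}^*(x,\chi,\theta)\,\sigma(x,\chi,\theta,j_0)\,W_{j_0}(g)\right).$$
By hypothesis this vanishes in $L^2(\mathbb{H}(G))$; the weak continuity assumption makes the right-hand side a continuous function of $(x,\chi,\theta)$, so it in fact vanishes at every point.

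It then remains, for each fixed $(x,\chi,\theta)$, to show that $K:=\rho_{j_0}^*(x,\chi,\theta)\,\sigma(x,\chi,\theta,j_0)\in S_2$ is zero. Since $g\mapsto tr(K\,W_{j_0}(g))$ is a bounded linear functional and $C_c(G\times\widehat{G})$ is dense in $L^2(G\times\widehat{G})$, the pointwise identity above extends to $tr(K\,W_{j_0}(g))=0$ for every $g\in L^2(G\times\widehat{G})$. Here I would invoke that the $j_0$-Weyl transform is a constant multiple of an isometry of $L^2(G\times\widehat{G})$ onto $S_2(L^2(G))$ (the analogue of the classical fact that the Weyl transform exhausts the Hilbert--Schmidt class, consistent with the Plancherel identity relating $\|\widehat{f}(j)\|_{S_2}$ to $C_{j,G}^{-1/2}\|f^{j}\|_{L^2}$ recorded above). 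Consequently the range of $W_{j_0}$ is dense in $S_2$, and non-degeneracy of the Hilbert--Schmidt pairing forces $tr(KB)=0$ for every $B\in S_2$; taking $B=K^*$ yields $\|K\|_{S_2}^2=tr(KK^*)=0$, i.e.\ $K=0$. Because $\rho_{j_0}^*(x,\chi,\theta)$ is unitary, hence invertible, this gives $\sigma(x,\chi,\theta,j_0)=0$; as $(x,\chi,\theta)$ and $j_0$ were arbitrary, $\sigma\equiv 0$, in particular almost everywhere on $\mathbb{H}(G)\times\mathbb{Z}^*$.

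I expect the main obstacle to be the surjectivity/density step: establishing rigorously that $\{W_{j_0}(g):g\in L^2(G\times\widehat{G})\}$ is dense in $S_2(L^2(G))$, equivalently that $K\mapsto\bigl[(x',\gamma')\mapsto tr(K\rho_{j_0}(x',\gamma',1))\bigr]$ is injective on $S_2$. If this is not available off the shelf, the fallback is to unwind $tr(K\,W_{j_0}(g))=\int_{G\times\widehat{G}} g(x',\gamma')\,tr(K\rho_{j_0}(x',\gamma',1))\,d\mu_G\,d\mu_{\widehat{G}}$, conclude $tr(K\rho_{j_0}(x',\gamma',1))=0$ for a.e.\ $(x',\gamma')$, and then deduce $K=0$ from irreducibility of $\rho_{j_0}$ via Schur's lemma, or directly from the injectivity of the associated Fourier--Wigner transform. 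Throughout, the summability conditions \eqref{2} are precisely what justify the interchange of sum and integral and the absolute convergence needed to manipulate the defining series pointwise.
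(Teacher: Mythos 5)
Your proof is correct, but it takes a genuinely different route from the paper's. The paper never isolates a single frequency $j_0$: instead, for each fixed point $(x,\chi,\theta)$ it constructs one test function $f_{x,\chi,\theta}\in L^2(\mathbb{H}(G))$ by prescribing its group Fourier transform, $\widehat{f}_{x,\chi,\theta}(j)=\sigma(x,\chi,\theta,j)^*\rho_j(x,\chi,\theta)$, so that $(T_\sigma f_{x,\chi,\theta})(x',\chi',\theta')=\sum_{j}tr\bigl(\rho_j^*(x',\chi',\theta')\sigma(x',\chi',\theta',j)\sigma(x,\chi,\theta,j)^*\rho_j(x,\chi,\theta)\bigr)$; weak continuity paired against the fixed Hilbert--Schmidt operators $\sigma(x,\chi,\theta,j)^*\rho_j(x,\chi,\theta)$, together with the bounds \eqref{2} feeding a dominated convergence argument in $j$, makes this function continuous, hence identically zero, and evaluating at $(x',\chi',\theta')=(x,\chi,\theta)$ collapses the series to $\sum_j\|\sigma(x,\chi,\theta,j)\|_{S_2}^2=0$. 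By contrast, your test functions $g(x,\chi)\overline{\theta}^{\,j_0}$ kill all terms but $j=j_0$ at the outset, so you never actually need \eqref{2} --- your closing remark that \eqref{2} justifies the interchanges in your argument is inaccurate for your own proof (those hypotheses are what the paper's proof consumes); what you need instead is the surjectivity, or at least dense range, of the $j_0$-Weyl transform onto $S_2(L^2(G))$, plus non-degeneracy of the trace pairing. That fact is recorded by the paper only in Section~\ref{trWe} (attributed to the line of argument of \cite{Shr}), but it is logically independent of Theorem~\ref{thm3.2}, so there is no circularity; moreover the paper's own construction implicitly relies on the same Plancherel-type surjectivity to know that $f_{x,\chi,\theta}$ exists in $L^2(\mathbb{H}(G))$, so neither route is really more self-contained. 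One caution: your fallback argument is flawed as stated, since for $K$ merely Hilbert--Schmidt the operator $K\rho_{j_0}(x',\gamma',1)$ is Hilbert--Schmidt but generally not trace class, so the pointwise quantities $tr(K\rho_{j_0}(x',\gamma',1))$ need not be defined and the unwinding of $tr(KW^{j_0}(g))$ into an integral is not justified; you should rely on the density/surjectivity route (or restrict the unwinding to finite-rank $K$ and approximate).
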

\begin{proof} If $\sigma(x, \chi, \theta, j)=0$ for almost all $(x, \chi, \theta, j) \in \mathbb{H}(G) \times \mathbb{Z}^*$ then it is obvious that $T_\sigma f=0$ for all $f \in f \in \mathbb{H}(G).$ 
	
	It remains to show that if $T_\sigma f= 0\,\,\, \forall f \in L^2(\mathbb{H}(G))$ then $\sigma(x, \chi, \theta, j)=0$ for almost all $(x, \chi, \theta, j) \in \mathbb{H}(G) \times \mathbb{Z}^*.$
	
	For $(x, \chi, \theta) \in \mathbb{H}(G),$ define a function $f_{x, \chi, \theta} \in L^2(\mathbb{H}(G))$ as 
	$$\widehat{f}_{x, \chi, \theta}(j)= \sigma(x, \chi, \theta, j)^* \rho_j(x, \chi, \theta)\,\,\,\, j \in \mathbb{Z}^*.$$ 
	
	For all $(x', \chi', \theta') \in \mathbb{H}(G),$ 
	$$(T_\sigma f_{x, \chi, \theta})(x', \chi', \theta')= \sum_{j \in \mathbb{Z}^*} tr(\rho_j^*(x', \chi', \theta') \sigma(x', \chi', \theta', j ) \sigma(x, \chi, \theta, j)^* \rho_j(x, \chi, \theta)).$$
	
	Now, let $(x_0, \chi_0, \theta_0) \in \mathbb{H}(G).$ Then, using the weak continuity of the mapping $$(x, \chi, \theta, j) \mapsto \rho_j^*(x, \chi, \theta) \sigma(x, \chi, \theta, j)$$ we get that 
	 \begin{align*}
	 &tr(\rho_j^*(x', \chi', \theta') \sigma(x', \chi', \theta', j ) \sigma(x, \chi, \theta, j)^* \rho_j(x, \chi, \theta))\\&  \rightarrow tr(\rho_j^*(x_0, \chi_0, \theta_0) \sigma(x_0, \chi_0, \theta_0, j ) \sigma(x, \chi, \theta, j)^* \rho_j(x, \chi, \theta))
	 \end{align*} 
	 as $(x', \chi', \theta', ) \rightarrow (x_0, \chi_0, \theta_0)$ in $\mathbb{H}(G).$ Moreover, by using  $\sup_{x, \chi, \theta, j} \|\sigma(x, \chi, \theta, j)\|_{S_2} <\infty$ there exists a constant $K>0$ such that for all $(x', \chi', \theta',j) \in \mathbb{H}(G) \times \mathbb{Z}^*,$
	 
	  $$|tr(\rho_j^*(x', \chi', \theta') \sigma(x', \chi', \theta', j ) \sigma(x, \chi, \theta, j)^* \rho_j(x, \chi, \theta))| \leq K \|\sigma(x, \chi, \theta, j)\|_{S_2}.$$
	  Now, using the assumption that $	\sum_{j \in \mathbb{Z}^*}  \|\sigma(x, \chi, \theta, j)\|_{S_2}<\infty,$ we get, an application of  Lebesgue dominated convergence theorem, that  
	 \begin{align*}
	&\sum_{j \in \mathbb{Z}^*}tr(\rho_j^*(x', \chi', \theta') \sigma(x', \chi', \theta', j ) \sigma(x, \chi, \theta, j)^* \rho_j(x, \chi, \theta))\\&  \rightarrow \sum_{j \in \mathbb{Z}^*} tr(\rho_j^*(x_0, \chi_0, \theta_0) \sigma(x_0, \chi_0, \theta_0, j ) \sigma(x, \chi, \theta, j)^* \rho_j(x, \chi, \theta))
	\end{align*} 
	as $(x', \chi', \theta', ) \rightarrow (x_0, \chi_0, \theta_0)$ in $\mathbb{H}(G).$ Therefore, $T_\sigma f_{x, \chi, \theta}$ is continuous. 
	
	Next, by letting $(x_0, \chi_0, \theta_0)= (x, \chi, \theta)$ we get
	\begin{align*}
	T_\sigma f_{x, \chi, \theta} (x, \chi, \theta)&= \sum_{j \in \mathbb{Z}^*} tr(\rho_j^*(x, \chi, \theta) \sigma(x, \chi, \theta, j ) \sigma(x, \chi, \theta, j)^* \rho_j(x, \chi, \theta)) \\& =\sum_{j \in \mathbb{Z}^*} tr( \sigma(x, \chi, \theta, j ) \sigma(x, \chi, \theta, j) ^*) \\&= \sum_{j \in \mathbb{Z}^*} \|\sigma(x, \chi, \theta, j)\|_{S_2}^2 =0
	\end{align*}
	So, $\|\sigma(x, \chi, \theta, j)\|_{S_2}=0$ for almost every $j \in \mathbb{Z}^*$ and hence $\sigma(x, \chi, \theta, j)=0$ for almost all $(x, \chi, \theta, j) \in \mathbb{H}(G) \times \mathbb{Z}^*. $ 
\end{proof}

\section{Trace of Weyl transform on a locally compact Abelian group}\label{trWe} 
 Let $X$ be a complex and separable Hilbert space in which the inner product is denoted by $(,)$ and let $A: X\rightarrow X$ be a compact operator. If we denote  by $A^{\ast}: X\rightarrow X$ the adjoint of $A:X\rightarrow X$ then the linear operator $(A^{\ast}A)^{1/2}:X\rightarrow X$ is positive and compact. Let $\{\psi_{k}, k=1,2,..\}$ be an orthonormal basis for $X$ consisting of eigenvalues of $(A^{\ast}A)^{1/2}:X\rightarrow X$ and let $s_{k}(A)$ be the eigenvalue corresponding to the eigenvector $\psi_{k}, k=1,2,3,...$. Then $s_k(A)$ $k=1,2,3,...$, are the singular values of $A:X\rightarrow X$. If $$\sum\limits_{k=1}^{\infty}s_{k}(A)<\infty,$$ then the linear operator $A:X\rightarrow X$ is said to be in the trace class $S_{1}.$ It can be shown that $S_1$ is a Banach space in  which the norm $||\cdot||_{S_1}$ is given by
 $$||A||_{S_1}=\sum\limits_{k=1}^{\infty}s_{k}(A),~~~A\in S_1.$$
 Let $A: X\rightarrow X$ be a linear operator in $S_1$ and let $\{\phi_{k}: k=1,2,3,...\}$ be any orthonormal basis for $X$.  Then from \cite{Resi}, the series $\sum\limits_{k=1}^{\infty}(A\phi_k,\phi_k)$ is absolutely convergent and the sum is independent of the choice of the orthonormal basis $\{\phi_{k}:k=1,2,3,...\}.$ Thus the trace of any linear operator $A: X\rightarrow X$ in $S_1$ is defined by
 $$\text{tr}(A)=\sum\limits_{k=1}^{\infty}(A\phi_k,\phi_k),$$ where $\{\phi_{k}, k=1,2,3...\}$ is any orthonormal basis of $X$. \\

 The following well-known theorem describe a relation between trace class operator and Hilbert-Schmidt operators. 
 \begin{thm}\label{trace} Let $T \in \mathcal{B}(\mathcal{H}).$ Then $T$ is a trace class operator if and only if there exist two Hilbert-Schmidt operators $U$ and $V$ on $\mathcal{H}$ such that $T=UV.$
 \end{thm}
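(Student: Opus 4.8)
The plan is to establish the two implications separately. For the converse direction ($T=UV \Rightarrow T \in S_1$) I would use a Cauchy--Schwarz estimate that realises the inclusion $S_2 \cdot S_2 \subseteq S_1$, while for the forward direction ($T \in S_1 \Rightarrow T=UV$) I would factor $T$ through its polar decomposition together with the square root of $|T|$.

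I would begin with the converse implication. Suppose $U, V \in S_2(\mathcal{H})$ and set $T = UV$. To show $T \in S_1$, I would invoke the variational description of the trace norm: for any pair of orthonormal systems $\{e_k\}$ and $\{f_k\}$ in $\mathcal{H}$ one has $\sum_k |(T e_k, f_k)| = \sum_k |(V e_k, U^* f_k)|$. Two applications of Cauchy--Schwarz, first in the summation index $k$ and then in the inner product on $\mathcal{H}$, bound this by $\left( \sum_k \|V e_k\|^2 \right)^{1/2} \left( \sum_k \|U^* f_k\|^2 \right)^{1/2} \leq \|V\|_{HS}\, \|U\|_{HS}$, where I use $\|U^*\|_{HS} = \|U\|_{HS}$. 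Since the resulting bound is finite and independent of the chosen systems, $T$ lies in $S_1$, and in fact $\|T\|_{S_1} \leq \|U\|_{HS}\|V\|_{HS}$.

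For the forward direction, suppose $T \in S_1(\mathcal{H})$. Here the key tool is the polar decomposition $T = W|T|$, where $|T| = (T^*T)^{1/2}$ is a positive operator with singular values $s_k(T)$ and $W \in \mathcal{B}(\mathcal{H})$ is a partial isometry. Since $|T|$ is positive I may form $|T|^{1/2}$, whose singular values are $s_k(T)^{1/2}$; therefore $\sum_k \big( s_k(T)^{1/2} \big)^2 = \sum_k s_k(T) = \|T\|_{S_1} < \infty$, so $|T|^{1/2} \in S_2$. Taking $U = W|T|^{1/2}$ and $V = |T|^{1/2}$ then yields $T = W|T| = UV$, with $V \in S_2$ by construction and $U \in S_2$ because $W$ is bounded and $S_2$ is a two-sided ideal of $\mathcal{B}(\mathcal{H})$.

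The delicate points are purely operator-theoretic rather than combinatorial: the variational formula for $\|\cdot\|_{S_1}$ used in the converse, and the identification of the singular values of $|T|^{1/2}$ with $s_k(T)^{1/2}$ in the forward direction. As these are classical facts of Schatten-class theory, I would cite a standard reference such as \cite{Resi} for them and present the argument above only as the skeleton.
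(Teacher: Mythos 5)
Your proof is correct; note, however, that the paper itself gives no argument for this statement --- it is quoted as a well-known fact with \cite{Resi} as the implicit reference --- so your proposal supplies the missing classical proof rather than paralleling or diverging from one. Both halves are sound: the factorization $T = \bigl(W|T|^{1/2}\bigr)\bigl(|T|^{1/2}\bigr)$ via the polar decomposition, with $W|T|^{1/2} \in S_2$ by the two-sided ideal property, is the standard proof of the forward direction, and the double Cauchy--Schwarz estimate realizes $S_2 \cdot S_2 \subseteq S_1$. The one step that deserves care is your appeal to the variational description of $\|\cdot\|_{S_1}$ over pairs of orthonormal systems: that characterization holds for arbitrary bounded operators, but it is itself a nontrivial theorem, and since the paper defines the trace class only for compact operators through singular values, a reader of this paper would need it as an external input. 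You can avoid it entirely and stay within the paper's definitions: $T = UV$ is compact (a product of compact operators), so $|T|$ admits an orthonormal eigenbasis $\{\psi_k\}$ of eigenvectors with eigenvalues $s_k(T)$; writing $|T| = W^*UV$, where $W$ is the partial isometry from the polar decomposition, gives
$$\sum_k s_k(T) = \sum_k (|T|\psi_k, \psi_k) = \sum_k (V\psi_k, U^*W\psi_k) \leq \|V\|_{HS}\, \|U^*W\|_{HS} \leq \|U\|_{HS}\, \|V\|_{HS} < \infty,$$
using Cauchy--Schwarz and the ideal property of $S_2$ together with $\|W\|_{\mathcal{B}(\mathcal{H})} \leq 1$. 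This makes the converse direction exactly as elementary and self-contained as your forward direction.
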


 In this section we have obtained the trace formula for the trace class Weyl transform associated to a symbol in $L^{2}(G\times \widehat{G}),$ where $G$ is a locally compact abelian group. \\
  Here first we recall that the Schr\"{o}dinger representation $\rho_j,$ $j \in \mathbb{Z}^*$ of the abstract Heisenberg group $\mathbb{H}(G)$  on $L^2(G)$ is defined by 
     $$(\rho_j(x, \chi, \theta) \varphi) (y)= \theta^j \chi(y)^j \varphi(x y)\,\,\,\,\,\, \varphi \in L^2(G).$$
Using $\rho_j,$ we define the {\it $j$-Weyl transform } $W^j: \sigma \mapsto W^j_\sigma$ from $ C_c(G \times \widehat{G})$ into $ \mathcal{B}(L^2(G))$ as 
$$W_\sigma^j (\varphi)(y)= \int_{G\times \widehat{G}} \sigma(x, \chi) (\rho_j(x, \chi, 1) \varphi)(y) \,d\mu_{\widehat{G}}(\chi) \, d\mu_G(x).$$ 

Which is further can be written as   
$$W^j_\sigma (\varphi)(y)= \int_{G\times \widehat{G}} \sigma(x, \chi) \chi(y)^j \varphi(x y) \,d\mu_{\widehat{G}}(\chi) \, d\mu_G(x)= \int_G K^j_\sigma(x,y) \, d\mu_G(x), $$
where 
\begin{equation} \label{sim}
K^j_\sigma(x,y)= \int_{\widehat{G}} \sigma(x y^{-1}, \chi) \chi(y)^j \, d\mu_{\widehat{G}}(\chi). 
\end{equation} 

Therefore, $W_\sigma^j$ is an integral operator with kernel $K^j_\sigma.$ Note that for $\sigma \in L^2(G \times \widehat{G}),$ the $j$-Weyl transform $W_\sigma^j$ is a Hilbert Schmidt operator. In fact, 
\begin{equation} \label{eq4}
\|W_\sigma^j\|_{S_2}= \|K_\sigma^j\|_{L^2(G \times \widehat{G})} =(C_{j,G})^{-1} \| \sigma \|_{L^2(G \times \widehat{G})}
\end{equation}
 and more generally, $\langle W_f^j, W_g^j \rangle_{S_2}= (C_{j,G})^{-1} \langle f, g \rangle_{L^2(G \times \widehat{G})}.$

For two function $f$ and $g$ in $L^2(G \times \widehat{G}),$ the { \it $j$-twisted convolution} $f \times_j g,$ $j \in \mathbb{Z}^*$ of $f$ and $g$ is defined by 

$$ f \times_j g (x, \chi) = \int_{G} \int_{\widehat{G}} f(x', \chi') g(xx'^{-1}, \chi \chi'^{-1}) \overline{\chi(x')}^j \, d\mu_G(x')\, d\mu_{\widehat{G}}(\chi').$$ 

Note for $j=1,$ $W^j_\sigma$ turns out to be the well known Weyl transform $W_\sigma$  and $j$- twisted convolution is nothing but twisted convolution studied, see \cite{Ra,Shr}.
In the same line of \cite{Shr} it can be  shown that $(L^2(G \times \widehat{G}), \times_j)$ is a Banach algebra. Further, the $j$-Weyl transform is a Banach algebra isomorphism from $L^2(G \times \widehat{G})$ onto the space of all Hilbert-Schmidt operators on $L^2(G)$ denoted by $S_2(L^2(G))$ or $S_2.$  
Therefore, for any $A \in S_2(L^2(G)),$ there exist a unique $\sigma \in L^2(G \times \widehat{G})$ such that $A= W_\sigma^j.$ Also, $W_\sigma^j W_\tau^j= W_{\sigma \times_j \tau}^j.$ 

Denote the subset of all $\lambda \in L^2(G \times \widehat{G}) $ such that there exist functions $\sigma, \tau \in L^2(G \times \widehat{G})$ such that $\lambda = \sigma \times_j \tau$ by $W_j.$

Now we present the following theorem on the characterization of trace class $j$-Weyl transform.  

\begin{thm} \label{4.1}
	Let $W_\sigma^j$ be the $j$-Weyl transform on $G$ associated with symbol $\sigma \in L^2(G \times \widehat{G}).$ Then $W_\sigma^j$ is a trace class operator if and only if $\sigma \in W_j.$ 
\end{thm}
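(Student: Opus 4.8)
The plan is to reduce everything to two facts already established in this section: the factorization theorem for trace class operators (Theorem~\ref{trace}), which says $T$ is trace class precisely when it factors as a product of two Hilbert--Schmidt operators, and the statement that the $j$-Weyl transform $W^j : \sigma \mapsto W_\sigma^j$ is a Banach algebra isomorphism from $(L^2(G \times \widehat{G}), \times_j)$ onto $S_2(L^2(G))$, intertwining twisted convolution with operator composition via $W_f^j W_g^j = W_{f \times_j g}^j$. Both directions of the equivalence then follow by transporting the factorization back and forth through this isomorphism.

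For the direction $\sigma \in W_j \Rightarrow W_\sigma^j \in S_1$, I would take $f, g \in L^2(G \times \widehat{G})$ with $\sigma = f \times_j g$, which exist by the definition of $W_j$. Then $W_\sigma^j = W_{f \times_j g}^j = W_f^j W_g^j$ by multiplicativity. Since $f, g \in L^2(G \times \widehat{G})$, the operators $W_f^j$ and $W_g^j$ are Hilbert--Schmidt by \eqref{eq4}, so $W_\sigma^j$ is a product of two Hilbert--Schmidt operators and hence trace class by Theorem~\ref{trace}.

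For the converse, suppose $W_\sigma^j$ is trace class. Theorem~\ref{trace} furnishes Hilbert--Schmidt operators $U, V$ on $L^2(G)$ with $W_\sigma^j = UV$. Because $W^j$ maps $L^2(G \times \widehat{G})$ \emph{onto} $S_2(L^2(G))$, I can write $U = W_f^j$ and $V = W_g^j$ for some $f, g \in L^2(G \times \widehat{G})$. Multiplicativity then gives $W_\sigma^j = W_f^j W_g^j = W_{f \times_j g}^j$, and injectivity of $W^j$ (part of the isomorphism statement, equivalently $\sigma = 0 \Leftrightarrow W_\sigma^j = 0$ from the isometry \eqref{eq4}) forces $\sigma = f \times_j g$. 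Thus $\sigma \in W_j$.

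I do not anticipate a serious obstacle here: the content of the theorem is entirely carried by Theorem~\ref{trace} and the isomorphism property, both assumed available. The only point requiring a moment of care is ensuring that the surjectivity and injectivity of $W^j$ are invoked correctly in the converse direction, so that the abstract factors $U,V$ can be realized as Weyl transforms of genuine $L^2$ symbols and the resulting identity $\sigma = f \times_j g$ is unambiguous.
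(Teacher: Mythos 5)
Your proposal is correct and follows essentially the same route as the paper: the forward direction uses $W_{f \times_j g}^j = W_f^j W_g^j$ together with the fact that a product of two Hilbert--Schmidt operators is trace class, and the converse uses Theorem~\ref{trace} plus the surjectivity of the $j$-Weyl transform onto $S_2(L^2(G))$ to realize the factors as $W_f^j$ and $W_g^j$. The only difference is cosmetic: you explicitly invoke injectivity of $W^j$ to conclude $\sigma = f \times_j g$, a step the paper leaves implicit.
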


\begin{proof}
	Let $\sigma \in W_j.$ Then there exist $\lambda$ and $\tau$ in $L^2(G \times \widehat{G})$ such that $\sigma = \lambda \times_j \tau.$ So, 
	$$W_\sigma^j= W_{\lambda \times_j \tau}^j = W_\lambda^j W_\tau^j.$$ 
	
	Since $\tau$ and $\lambda$ are in $L^2(G \times \widehat{G}).$ It follows that $W_\lambda$ and $W_\tau$ are Hilbert Schmidt operators. Therefore, $W_\sigma$ is a trace class operator being product of two Hilbert Schmidt operators. 
	
	Conversely, suppose that $W_\sigma^j$ is a trace class operator. Then, it follows that $W_\sigma= A B$ for some Hilbert Schmidt operators  $A$ and $B$ on $L^2(G).$ 
	Since $j$-Weyl transform is an algebra isomorphism from $L^2(G \times \widehat{G})$ onto $S_2,$ it follows that there exist $\lambda, \tau \in L^2(G \times \widehat{G}) $ such that $A= W_\lambda^j$ and $B= W_\tau^j$ and therefore, $W_\sigma^j= W_\lambda^j W_\tau^j= W_{\lambda \times_j \tau}^j.$ Hence, $\sigma \in W_j.$ 
\end{proof}

The following corollary of Theorem \ref{4.1} is immediate once we recall that $\sigma_j \mapsto W_\sigma^j$ is a Banach algebra isomorphism from $(L^2(G \times \widehat{G}), \times_j )$ onto $S_2(L^2(G)).$
\begin{cor} \label{4.2}
	The $W_j$ is a subspace of $L^2(G \times \widehat{G}).$
\end{cor}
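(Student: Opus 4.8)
The plan is to transport the linear structure of the trace class through the $j$-Weyl transform rather than to argue directly at the level of twisted convolutions. Write $\Phi(\sigma) := W_\sigma^j$; by the facts recalled just before Theorem \ref{4.1}, $\Phi$ is a linear bijection from $L^2(G \times \widehat{G})$ onto $S_2(L^2(G))$ (it is in fact a Banach algebra isomorphism when $L^2(G \times \widehat{G})$ carries $\times_j$ and $S_2$ carries composition, but for this corollary only its linearity and bijectivity are needed).

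First I would reinterpret the defining condition of $W_j$. By Theorem \ref{4.1}, $\sigma \in W_j$ if and only if $W_\sigma^j = \Phi(\sigma)$ is a trace class operator. Hence $W_j = \Phi^{-1}(S_1 \cap S_2)$. Since every trace class operator is Hilbert-Schmidt, i.e.\ $S_1 \subseteq S_2$, this simplifies to $W_j = \Phi^{-1}(S_1)$, and the preimage is well defined because $S_1 \subseteq S_2 = \operatorname{ran}(\Phi)$.

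Next I would invoke the standard fact that $S_1$ is a vector subspace of $\mathcal{B}(L^2(G))$: it is visibly closed under scalar multiplication and closed under addition because it is a Banach space under $\|\cdot\|_{S_1}$. A linear map pulls back a subspace to a subspace, so $W_j = \Phi^{-1}(S_1)$ is a subspace of $L^2(G \times \widehat{G})$, which is the claim.

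A remark on where the content sits. Closure of $W_j$ under scalar multiplication is immediate even directly, since $c(\sigma \times_j \tau) = (c\sigma) \times_j \tau$ by bilinearity of $\times_j$. The genuinely non-obvious point is closure under addition: there is no evident way to exhibit $\sigma_1 \times_j \tau_1 + \sigma_2 \times_j \tau_2$ as a single twisted convolution $\sigma \times_j \tau$ at the level of functions. The operator-theoretic route sidesteps this precisely because $\Phi$ carries addition to addition and $\times_j$ to composition: the operator $W_{\sigma_1}^j W_{\tau_1}^j + W_{\sigma_2}^j W_{\tau_2}^j$ is a sum of two trace class operators, hence trace class, and Theorem \ref{4.1} returns it to $W_j$. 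That is the only step genuinely requiring Theorem \ref{4.1}, and I expect it to be the main (though modest) obstacle.
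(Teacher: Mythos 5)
Your proof is correct and takes essentially the same route as the paper: the paper's one-line justification for Corollary \ref{4.2} is precisely that it is immediate from Theorem \ref{4.1} together with the fact that $\sigma \mapsto W_\sigma^j$ is a linear (Banach algebra) isomorphism of $L^2(G \times \widehat{G})$ onto $S_2(L^2(G))$, so that $W_j$ is the preimage under this isomorphism of the subspace $S_1 \subseteq S_2$. Your write-up simply makes this explicit, including the observation that closure under addition is the only point where Theorem \ref{4.1} is genuinely needed.
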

 
 In addition to this corollary, we have the following result the space $W_j.$ 

\begin{thm} $W_j$ is a dense subspace of $L^2(G \times \widehat{G}).$
\end{thm}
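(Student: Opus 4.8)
The plan is to transport the statement to the operator side via the $j$-Weyl transform and then invoke the classical fact that the trace class operators are dense in the Hilbert-Schmidt operators.

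First I would record that, by the norm identity \eqref{eq4}, namely $\|W_\sigma^j\|_{S_2} = (C_{j,G})^{-1}\|\sigma\|_{L^2(G\times\widehat{G})}$, the $j$-Weyl transform $W^j$ is a fixed positive scalar multiple of an isometry from $(L^2(G\times\widehat{G}), \|\cdot\|_{L^2})$ onto $(S_2(L^2(G)), \|\cdot\|_{S_2})$; in particular it is a linear homeomorphism, so it carries dense sets to dense sets and reflects density. By Theorem \ref{4.1}, a symbol $\sigma \in L^2(G\times\widehat{G})$ lies in $W_j$ precisely when $W_\sigma^j$ is of trace class. Since every trace class operator on $L^2(G)$ is Hilbert-Schmidt and $W^j$ is onto $S_2(L^2(G))$, it follows that $W^j$ maps $W_j$ bijectively onto the set $S_1(L^2(G))$ of trace class operators on $L^2(G)$. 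Consequently, $W_j$ is dense in $L^2(G\times\widehat{G})$ if and only if $S_1(L^2(G))$ is dense in $S_2(L^2(G))$ in the Hilbert-Schmidt norm.

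It then remains to establish this last density statement, for which I would use the finite rank operators. Given any $A \in S_2(L^2(G))$, I would take its canonical (Schmidt) decomposition $A = \sum_{k=1}^\infty s_k(A)(\cdot, \psi_k)\phi_k$ in terms of the singular values $s_k(A)$ and suitable orthonormal systems $\{\psi_k\}$, $\{\phi_k\}$, and form the truncations $A_N = \sum_{k=1}^N s_k(A)(\cdot,\psi_k)\phi_k$. Each $A_N$ has finite rank, hence is trace class, and $\|A - A_N\|_{S_2}^2 = \sum_{k>N} s_k(A)^2 \to 0$ as $N \to \infty$, because $\sum_k s_k(A)^2 = \|A\|_{S_2}^2 < \infty$. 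Thus the finite rank operators, all of which belong to $S_1(L^2(G))$, are already dense in $S_2(L^2(G))$, and \emph{a fortiori} $S_1(L^2(G))$ is dense.

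Pulling this conclusion back through $W^j$ yields the theorem. I do not anticipate a genuine obstacle; the only point needing a little care is the verification that $W^j$ is a homeomorphism, equivalently that density is transported faithfully in both directions, and this is immediate from the norm identity \eqref{eq4}. A more self-contained alternative would avoid the operator side altogether and argue that the $j$-twisted convolutions $\sigma \times_j \tau$ with $\sigma, \tau \in C_c(G\times\widehat{G})$ already exhaust a dense subset of $L^2(G\times\widehat{G})$, but this forces a direct analysis of the twisted convolution, so the operator-theoretic route above is the cleaner one.
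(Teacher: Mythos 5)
Your proof is correct and follows essentially the same route as the paper: both transport the problem through the $j$-Weyl transform isomorphism of $L^2(G\times\widehat{G})$ onto $S_2(L^2(G))$ (using \eqref{eq4} and Theorem \ref{4.1}) and then use the density of finite-rank (hence trace class) operators in the Hilbert--Schmidt class. The only cosmetic difference is that you prove that density explicitly via Schmidt-decomposition truncations, whereas the paper cites it as a known fact and phrases the argument in terms of the set of symbols whose $j$-Weyl transform has finite rank.
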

\begin{proof}
   In view of Corollary \ref{4.2},	we only need to prove that $W_j$ is dense in $L^2(G\times \widehat{G}).$ Let $F$ be the set of all functions $\sigma$ on $G\times \widehat{G}$  such that $W_\sigma^j$ is a finite rank operator on $L^2(G)$. Since every element in $S_2(L^2(G))$ is the limit in $S_2(L^2(G))$ of a sequence of finite rank operators on $L^2(G)$ and $W_\sigma^j$ is in $S_2(L^2(G))$ if and only if $\sigma \in L^2(G \times \widehat{G}),$ it follows that $F$ is a dense subspace of $L^2(G\times \widehat{G})$. Obviously, $F$ is a subspace of $W_j$. Therefore $W_j$ is dense in $L^2(G\times \widehat{G}).$
\end{proof}

Now we calculate the trace of trace of $j$-Weyl transform $W_\sigma^j$ associated with symbol $\sigma.$

\begin{thm}
     Let $\sigma  \in L^2(G \times \widehat{G})$ such that the $j$-Weyl transform $W_\sigma^j$ is a trace class operator. Then 
    $$ tr(W_\sigma^j)= \int_G \int_{\widehat{G}} \sigma(x, \chi)\, d\mu_{\widehat{G}}(\chi) \,d\mu_G(x).$$  
\end{thm}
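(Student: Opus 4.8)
The plan is to compute the trace directly from the definition $\mathrm{tr}(W_\sigma^j)=\sum_k (W_\sigma^j \phi_k, \phi_k)$ by exploiting the fact, recalled in equation \eqref{sim}, that $W_\sigma^j$ is an integral operator with kernel $K_\sigma^j(x,y)$. The governing principle is the standard result that the trace of a trace class integral operator equals the integral of its kernel along the diagonal, namely $\mathrm{tr}(W_\sigma^j)=\int_G K_\sigma^j(y,y)\,d\mu_G(y)$. First I would justify the passage to this diagonal formula: since $\sigma\in L^2(G\times\widehat G)$ and $W_\sigma^j$ is assumed trace class, Theorem \ref{trace} lets us factor $W_\sigma^j=UV$ with $U,V$ Hilbert--Schmidt, and the kernel is square integrable; I would then invoke the classical identity for the trace in terms of the kernel (as in \cite{Resi}), being careful that the diagonal restriction of the kernel makes sense.

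Next I would substitute the explicit kernel from \eqref{sim} into the diagonal formula. Setting $x=y$ in $K_\sigma^j(x,y)=\int_{\widehat G}\sigma(xy^{-1},\chi)\chi(y)^j\,d\mu_{\widehat G}(\chi)$ gives
\begin{equation}\label{diagkernel}
K_\sigma^j(y,y)=\int_{\widehat G}\sigma(yy^{-1},\chi)\chi(y)^j\,d\mu_{\widehat G}(\chi)=\int_{\widehat G}\sigma(e,\chi)\chi(y)^j\,d\mu_{\widehat G}(\chi),
\end{equation}
where $e$ denotes the identity of $G$. Then integrating over the diagonal,
\begin{equation}\label{tracecalc}
\mathrm{tr}(W_\sigma^j)=\int_G K_\sigma^j(y,y)\,d\mu_G(y)=\int_G\int_{\widehat G}\sigma(e,\chi)\chi(y)^j\,d\mu_{\widehat G}(\chi)\,d\mu_G(y).
\end{equation}

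The final step is to recognize the inner structure as a Fourier-analytic identity. Here I expect the substance of the argument to lie in matching \eqref{tracecalc} against the claimed answer $\int_G\int_{\widehat G}\sigma(x,\chi)\,d\mu_{\widehat G}(\chi)\,d\mu_G(x)$; the point is that the map $x\mapsto jx$ is an automorphism of $G$ (the standing hypothesis on $G$), so integration in $y$ against the characters $\chi^j=\chi\circ(j\,\cdot)$ reproduces, up to the normalizing constant $C_{j,G}$, an evaluation/inversion that collapses the $y$-integral onto the $x=e$ slice, converting the diagonal expression back into the full integral of $\sigma$. I would make this precise using the Fourier inversion formula on the abelian group $G$, treating $\chi\mapsto\sigma(x,\chi)$ as a function on $\widehat G$ whose inverse transform is being evaluated, and tracking the constant $C_{j,G}$ defined by $C_{j,G}\,d\mu_G(j^{-1}x)=d\mu_G(x)$ through the change of variables $y\mapsto jy$.

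The main obstacle will be the rigorous justification of the diagonal trace formula and the bookkeeping of the measure-normalizing constant $C_{j,G}$. Exchanging the summation over the orthonormal basis with the integrations, and asserting that the trace equals the on-diagonal integral of the kernel, requires that $K_\sigma^j$ be continuous (or at least that its diagonal values be well defined as an $L^1$ function), which is not automatic merely from $\sigma\in L^2(G\times\widehat G)$; I would handle this by first establishing the formula on the dense subspace $W_j$ of symbols producing finite-rank or genuinely trace class operators and then, if needed, passing to the limit. Simultaneously, the factors of $C_{j,G}$ arising from \eqref{eq4} and from the dilation $y\mapsto jy$ must cancel precisely so that no spurious constant survives in the clean formula $\mathrm{tr}(W_\sigma^j)=\int_G\int_{\widehat G}\sigma(x,\chi)\,d\mu_{\widehat G}(\chi)\,d\mu_G(x)$; verifying this cancellation is the delicate quantitative heart of the proof.
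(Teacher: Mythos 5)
Your plan reproduces the paper's own route---kernel formula, diagonal trace formula, and the computation $K_\sigma^j(y,y)=\int_{\widehat G}\sigma(e,\chi)\chi(y)^j\,d\mu_{\widehat G}(\chi)$---but the step you defer to ``a Fourier-analytic identity'' is precisely where the argument cannot be completed, and deferring it is a genuine gap, not a technicality. Fourier inversion does the opposite of what you hope: integrating the inverse transform of $g=\sigma(e,\cdot)$ over $G$ evaluates $g$ at the trivial character, so (under regularity making the manipulations legitimate, and tracking the dilation $y\mapsto jy$) one gets
\begin{equation*}
\int_G\int_{\widehat G}\sigma(e,\chi)\chi(y)^j\,d\mu_{\widehat G}(\chi)\,d\mu_G(y)\;=\;C_{j,G}^{-1}\,\sigma(e,\mathbf 1),
\end{equation*}
a \emph{point evaluation} of $\sigma$ at the identity of $G\times\widehat G$, not $\int_{G\times\widehat G}\sigma$. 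There is a structural reason no bookkeeping of constants can fix this: the diagonal values $K_\sigma^j(y,y)$ depend only on the slice $\sigma(e,\cdot)$ (a null set in $G\times\widehat G$ when $G$ is nondiscrete, so not even well defined for $\sigma\in L^2$), whereas the claimed answer depends on all of $\sigma$. A finite example makes the failure concrete: take $G=\Z_p$ with counting measures, $j=1$, and $\sigma(x,\chi)=\delta_{x,x_0}$ with $x_0\neq e$. Then $W_\sigma$ is the rank-one operator $\varphi\mapsto p\,\varphi(x_0)\,\delta_e$, which is trace class with $\mathrm{tr}(W_\sigma)=0$, while $\int\int\sigma=p\neq 0$.

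You should also know that the paper's own proof follows exactly your outline and hides this same gap in a single unjustified line: it passes from $\int_G\int_{\widehat G}\sigma(e,\chi)\chi(x)^j\,d\mu_{\widehat G}\,d\mu_G$ to $\int_G\int_{\widehat G}\sigma(x^{-1},\chi)\chi(e)^j\,d\mu_{\widehat G}\,d\mu_G$, silently moving the group variable from the character slot into the symbol slot, which no change of variables accomplishes. So your instinct that this is ``the delicate quantitative heart'' is exactly right, but the heart cannot be supplied: the identity, and hence the theorem as stated, fails without further hypotheses (the counterexample above satisfies all stated ones). What the diagonal-kernel argument actually yields, for symbols regular enough to justify it, is $\mathrm{tr}(W_\sigma^j)=C_{j,G}^{-1}\sigma(e,\mathbf 1)$, the analogue of the classical fact that the trace of an integrated (projective) representation $\int\sigma(x,\chi)\rho_j(x,\chi,1)\,d\mu_G\,d\mu_{\widehat G}$ is proportional to the value of the symbol at the identity; it is the genuine Weyl transform built on midpoints, not this one, whose trace is the full phase-space integral of the symbol.
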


\begin{proof}
	Since $\sigma \in L^2(G \times \widehat{G}),$ it follows that $W_\sigma^j$ is a Hilbert Schmidt operator with kernel $K_\sigma^j(x,y)$ given by 
	$$ K_\sigma^j(x,y)= \int_{\widehat{G}} \sigma(x y^{-1}, \chi) \chi(y)^j \, d\mu_{\widehat{G}}(\chi).$$
	
	Now, as by assumption $W_\sigma^j$ is a trace class operator so $\int_G K_\sigma^j(x, x) d\mu_G(x)$ exists and 
	\begin{align*}
	tr(W_\sigma^j) &= \int_G K_\sigma^j(x,x) \, d\mu_G(x) \\ &= \int_G \int_{\widehat{G}} \sigma(e, \chi) \, \chi(x)^j\, d\mu_{\widehat{G}}(\chi)\, d\mu_G(x) \\ &= \int_{G} \int_{\widehat{G}} \sigma(x^{-1}, \chi) \,\chi(e)^j \,d\mu_{\widehat{G}}(\chi)\, d\mu_G(x) \\&= \int_{G} \int_{\widehat{G}} \sigma(x, \chi)\, \chi(e)^j \,d\mu_{\widehat{G}}(\chi)\, d\mu_G(x)
	\end{align*}
	 where $e$ denotes the identity element of $G.$ Since $\chi(e)=1,$ we get 
	 
	 $$ tr(W_\sigma^j)= \int_{G} \int_{\widehat{G}} \sigma(x, \chi)\,d\mu_{\widehat{G}}(\chi)\, d\mu_G(x).$$ 
	\end{proof}

We will give another formula for the trace of a trace class Weyl transform which will be useful for our study in next section. 

\begin{thm} \label{4.5}
 Let $\sigma = \lambda \times_j \tau $ for some $\lambda, \tau \in L^2(G \times \widehat{G})$ such that the $j$-Weyl transform $W_\sigma^j$ is a trace class operator. Then 
 \begin{align} \label{eq4.5}
 tr(W_\sigma^j) = (C_{j,G})^{-1}\int_{G \times \widehat{G}} \tau(x, \chi)  \, \lambda(x^{-1}, \chi)\, \overline{\chi(x)}\, d\mu_G(x) \, d\mu_{\widehat{G}}(\chi).
 \end{align}  
\end{thm}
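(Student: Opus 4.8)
The plan is to exploit the multiplicativity of the $j$-Weyl transform together with the Hilbert--Schmidt isometry recorded in \eqref{eq4}. Since $\sigma = \lambda \times_j \tau$, the relation $W_\sigma^j = W_\lambda^j W_\tau^j$ writes the trace class operator $W_\sigma^j$ as a product of the two Hilbert--Schmidt operators $W_\lambda^j$ and $W_\tau^j$; this is precisely the factorization underlying Theorem \ref{4.1}. For Hilbert--Schmidt operators $A,B$ on a separable Hilbert space one has the elementary identity $tr(AB) = \langle A, B^\ast\rangle_{S_2}$, where $\langle\cdot,\cdot\rangle_{S_2}$ denotes the Hilbert--Schmidt inner product. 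Applying this with $A = W_\lambda^j$ and $B = W_\tau^j$ reduces the computation to evaluating $\langle W_\lambda^j, (W_\tau^j)^\ast\rangle_{S_2}$, and this is the route along which the constant $(C_{j,G})^{-1}$ will be produced. I would prefer this over directly unwinding $\int_{G\times\widehat G}(\lambda\times_j\tau)$ via the earlier trace formula, since the latter forces a Fourier inversion over $\widehat G$, whereas the adjoint route keeps everything expressed through evaluations of the symbols.

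The key intermediate step is to identify the adjoint $(W_\tau^j)^\ast$ as itself a $j$-Weyl transform. Because the $j$-Weyl transform is a bijection from $L^2(G \times \widehat G)$ onto $S_2(L^2(G))$, there is a unique symbol $\tau^\ast \in L^2(G \times \widehat G)$ with $(W_\tau^j)^\ast = W_{\tau^\ast}^j$. I would compute $\tau^\ast$ through the kernel representation \eqref{sim}: the operator $W_\tau^j$ has kernel $K_\tau^j(x,y) = \int_{\widehat G} \tau(xy^{-1},\chi)\chi(y)^j\,d\mu_{\widehat G}(\chi)$, so its adjoint has kernel $\overline{K_\tau^j(y,x)}$. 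Writing this out, substituting $\chi \mapsto \chi^{-1}$ in the $\widehat G$-integration, and matching against the template $K_{\tau^\ast}^j(x,y) = \int_{\widehat G}\tau^\ast(xy^{-1},\chi)\chi(y)^j\,d\mu_{\widehat G}(\chi)$ yields an explicit formula for $\tau^\ast$ of the shape $\tau^\ast(x,\chi) = \overline{\tau(x^{-1},\chi^{-1})}\,\chi(x)^j$.

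With $\tau^\ast$ in hand, the isometry \eqref{eq4}, namely $\langle W_f^j, W_g^j\rangle_{S_2} = (C_{j,G})^{-1}\langle f,g\rangle_{L^2(G\times\widehat G)}$, gives $tr(W_\sigma^j) = \langle W_\lambda^j, W_{\tau^\ast}^j\rangle_{S_2} = (C_{j,G})^{-1}\int_{G\times\widehat G}\lambda(x,\chi)\,\overline{\tau^\ast(x,\chi)}\,d\mu_G(x)\,d\mu_{\widehat G}(\chi)$. Substituting the expression for $\overline{\tau^\ast}$ and then carrying out the change of variables $(x,\chi)\mapsto(x^{-1},\chi^{-1})$, under which both Haar measures are invariant and $\chi^{-1}(x^{-1}) = \chi(x)$, rearranges the integrand into the product $\tau(x,\chi)\,\lambda(x^{-1},\chi)\,\overline{\chi(x)}$, which is exactly \eqref{eq4.5}.

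I expect the main obstacle to be the correct determination of the adjoint symbol $\tau^\ast$: one must track carefully how the inversion $\chi\mapsto\chi^{-1}$, the complex conjugation, and the $j$-th powers of the characters interact, since an error in any one of these produces a spurious character factor or the wrong inverse in the arguments of $\lambda$ and $\tau$. A secondary point requiring care is the applicability of $tr(AB) = \langle A, B^\ast\rangle_{S_2}$, which rests on $W_\sigma^j$ being genuinely trace class --- guaranteed here by hypothesis through Theorem \ref{4.1} --- together with the fact that the concluding substitution is measure-preserving, which holds because inversion is a measure-preserving automorphism of the (unimodular, abelian) groups $G$ and $\widehat G$.
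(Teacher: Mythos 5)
Your strategy is essentially the paper's own: factor $W_\sigma^j = W_\lambda^j W_\tau^j$, convert the trace into a Hilbert--Schmidt inner product, realize the adjoint of a $j$-Weyl transform as another $j$-Weyl transform, and finish with the isometry \eqref{eq4}. (The paper writes $tr(W_\lambda^j W_\tau^j)=\sum_k \langle W_\tau^j\varphi_k,(W_\lambda^j)^*\varphi_k\rangle = \langle W_\tau^j,(W_\lambda^j)^*\rangle_{S_2}$; your identity $tr(AB)=\langle A,B^*\rangle_{S_2}$ is the same reduction with the roles of the two factors exchanged.) Moreover, your formula for the adjoint symbol is the correct one for the paper's conventions: since $(x,\chi,1)^{-1}=(x^{-1},\chi^{-1},\chi(x))$ and $\rho_j(x,\chi,\theta)=\theta^j\rho_j(x,\chi,1)$, one gets
\begin{equation*}
(W_\tau^j)^* = W_{\tau^*}^j, \qquad \tau^*(x,\chi)=\overline{\tau(x^{-1},\chi^{-1})}\,\chi(x)^j ,
\end{equation*}
exactly as you predicted.

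The gap is in your final step. With this $\tau^*$, the integrand is $\lambda(x,\chi)\,\overline{\tau^*(x,\chi)} = \lambda(x,\chi)\,\tau(x^{-1},\chi^{-1})\,\overline{\chi(x)}^{\,j}$, and the substitution $(x,\chi)\mapsto(x^{-1},\chi^{-1})$ (under which $\chi^{-1}(x^{-1})=\chi(x)$) turns it into $\tau(x,\chi)\,\lambda(x^{-1},\chi^{-1})\,\overline{\chi(x)}^{\,j}$ --- \emph{not} into $\tau(x,\chi)\,\lambda(x^{-1},\chi)\,\overline{\chi(x)}$ as you assert: the inversion on the character variable of $\lambda$ does not cancel, and the $j$-th power of $\overline{\chi(x)}$ does not disappear. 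So, carried out consistently, your argument proves
\begin{equation*}
tr(W_\sigma^j) = (C_{j,G})^{-1}\int_{G\times\widehat{G}}\tau(x,\chi)\,\lambda(x^{-1},\chi^{-1})\,\overline{\chi(x)}^{\,j}\,d\mu_G(x)\,d\mu_{\widehat{G}}(\chi),
\end{equation*}
which is not \eqref{eq4.5}. No cleverer substitution repairs this, because the discrepancy originates in the paper itself: its proof rests on the adjoint formula $\tilde\lambda(x,\chi)=\chi(x)\overline{\lambda(x^{-1},\chi)}$, which is incorrect (test $\tau=\delta_{(a,\chi_0)}$ on a finite $G$: then $W_{\tilde\tau}^j\varphi(y)=\chi_0(a^{-1})\chi_0(y)^j\varphi(a^{-1}y)$, whereas $(W_\tau^j)^*\varphi(y)=\overline{\chi_0(a^{-1}y)}^{\,j}\varphi(a^{-1}y)$, and these differ for nontrivial $\chi_0$), and \eqref{eq4.5} inherits that error. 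In short: your adjoint computation is the sound part of the argument, the paper's is not, and the honest conclusion of your proof is the corrected formula displayed above; claiming that your computation lands on \eqref{eq4.5} as stated is the step that fails.
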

	
\begin{proof} First note that
	\begin{equation}
	W_\sigma^j=  W_\lambda^j W_\tau^j = W_{\lambda\times_j \tau}^j
	\end{equation}
	
	Since $W_\sigma^j$ is a trace class operator and hence a Hilbert Schmidt operator on $L^2(G)$. Let $\{\varphi_k: k \in \mathbb{N} \}$ is an orthonormal basis for $L^2(G).$ Then, by using the fact that $(W^j_\lambda)^*= W_{\tilde{\lambda}},$ where $\tilde{\lambda}(x, \chi)=\chi(x)\, \overline{\lambda(x^{-1}, \chi)},$ we have 
	
	\begin{align*}
	tr(W_\sigma^j) &= \sum_{k \in \mathbb{N}} \langle W_\sigma^j \varphi_k, \varphi_k \rangle = \sum_{k \in \mathbb{N}} \langle W_\lambda^j W_\tau^j \varphi_k, \varphi_k \rangle \\ &=  \sum_{k \in \mathbb{N}} \langle  W_\tau^j \varphi_k, (W^j_\lambda) ^* \varphi_k \rangle = \sum_{k \in \mathbb{N}} \langle  W_\tau^j \varphi_k, W_{\tilde{\lambda}}^j \varphi_k \rangle = \langle W_\tau^j, W_{\tilde{\lambda}}^j \rangle_{S_2}.
	\end{align*} 
	By Using the relation $\langle W_f^j, W_g^j \rangle_{S_2}= (C_{j,G})^{-1} \langle f, g \rangle_{L^2(G \times \widehat{G})},$ we get 
	
\begin{align}
tr(W_\sigma^j) &= (C_{j,G})^{-1} \langle \tau, \tilde{\lambda} \rangle_{L^2(G \times \widehat{G})} \\&= (C_{j,G})^{-1} \int_{G \times \widehat{G}} \overline{\chi(x)}\, \tau(x, \chi) \, \lambda(x^{-1}, \chi)\, d\mu_G(x) \, d\mu_{\widehat{G}}(\chi). 
\end{align} 
	\end{proof}

\section{Hilbert-Schmidt pseudo-differential operators on the abstract Heisenberg groups }\label{hilbdd} 

In this section, we characterize the Hilbert-Schmidt pseudo-differential operators in terms of their corresponding symbols. We begin this section with following observation.

Let $f \in L^2(\mathbb{H}(G)).$ For $j \in \mathbb{Z}^*,$  we know that $f^j$ is defined as  

$$f^j(x, \chi)= \int_{\mathbb{T}} f(x, \chi, \theta) \, \theta^j\, d\theta, ~~(x,\chi,\theta)\in G\times\widehat{G}\times \mathbb{T}.$$

Note that $f^j$ is the inverse Fourier transform of $f$ in $\theta$ variable or Fourier transform of $f$ with respect to the center of $\mathbb{H}(G).$ Therefore, it is convenient to write $f^j$ is the following form: 

$$f^j(x, \chi)= (\mathcal{F}_c^{-1}f)(x, \chi, j)= (\mathcal{F}_c f)(x, \chi, -j),$$

where $\mathcal{F}_c$ denote the Fourier transform with respect to center of $\mathbb{H}(G).$   

Before stating our main theorem of this section, we would like to note $\widehat{f}(j) \varphi= W_{f^j}^j(\varphi)$ for all $\varphi \in L^2(\mathbb{H}(G)).$ In fact,
 \begin{align*}
 \widehat{f}(j) \varphi(y) &= \int_{G \times \widehat{G} \times \mathbb{T}} f(x, \chi, \theta) \,(\rho_j(x, \chi, \theta) \varphi)(y) d\mu_G(x)\, d \mu_{\widehat{G}}(\chi)\, d\mu_{\mathbb{T}}(\theta) \\&=  \int_{G \times \widehat{G} \times \mathbb{T}}  f(x, \chi, \theta) \, \theta^j (\rho_j(x, \chi, 1)\varphi)(y)   d\mu_G(x)\, d\mu_{\widehat{G}}(\chi)\, d\mu_{\mathbb{T}}(\theta) \\&= \int_{G\times \widehat{G}} f^j(x, \chi)\, \chi(x)^j \varphi(xy)  d\mu_G(x)\, d\mu_{\widehat{G}}(\chi) \\&= (W_{f^j}^j\varphi)(y).
 \end{align*}

The following theorem is the main theorem of this section which characterizes Hilbert-Schmidt pseudo-differential operators on $L^2(\mathbb{H}(G)).$ 

\begin{thm} \label{thm5.1}
	Let $\sigma: \mathbb{H}(G) \times \mathbb{Z}^* \rightarrow S_2$ be a symbol such that the hypothesis of Theorem \ref{thm3.2} is satisfied. Then the corresponding pseudo-differential operator $T_\sigma:L^2(\mathbb{H}(G)) \rightarrow \mathbb{H}(G)$ is Hilbert-Schmidt operator if and only if 
	$$\sigma(x, \chi, \theta, j)= C_{j,G} \, \rho_j(x, \chi, \theta) W^j_{\alpha(x, \chi, \theta)^{-j}}\,\,\,\,\, (x, \chi, \theta) \in \mathbb{H}(G),\, j \in \mathbb{Z}^*,$$
	where $\alpha: \mathbb{H}(G) \rightarrow L^2(\mathbb{H}(G))$ is a weakly continuous mapping for which 
	$$\int_{G \times \widehat{G} \times \mathbb{T}}\|\alpha(x, \chi, \theta)\|_{L^2(\mathbb{H}(G))}^2 d\mu_G(x)\, d\mu_{\widehat{G}}(\chi)\, d\mu_{\mathbb{T}}(\theta)<\infty,  $$
	$$ \sup_{(x, \chi, \theta, j) \in \mathbb{H}(G) \times \mathbb{Z}^*} \|\mathcal{F}_c\alpha(x, \chi, \theta)(\cdot, \cdot, j)\|_{L^2(G \times \widehat{G})}<\infty $$
	and $$\sum_{j \in \mathbb{Z}^*} \|(\mathcal{F}_c \alpha(x, \chi, \theta))(\cdot,\cdot,j)\|_{L^2(G \times \widehat{G})}<\infty. $$
\end{thm}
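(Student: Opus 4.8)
The plan is to show that $T_\sigma$ is Hilbert--Schmidt exactly when it is an integral operator on $L^2(\mathbb{H}(G))$ with an $L^2$ kernel, and to read that kernel off directly from the proposed form of $\sigma$. Throughout I would use the two facts already recorded in the excerpt: the identity $\widehat{f}(j)=W^j_{f^j}$, and the composition rule $W^j_\lambda W^j_\tau=W^j_{\lambda\times_j\tau}$ together with the trace formula \eqref{eq4.5} of Theorem \ref{4.5}. I would first set up a dictionary between the hypotheses. Since $\rho_j(x,\chi,\theta)$ is unitary on $L^2(G)$ and $W^j$ is, by \eqref{eq4}, an isomorphism of $L^2(G\times\widehat{G})$ onto $S_2$, the form $\sigma(x,\chi,\theta,j)=C_{j,G}\,\rho_j(x,\chi,\theta)W^j_{\alpha(x,\chi,\theta)^{-j}}$ gives $\|\sigma(x,\chi,\theta,j)\|_{S_2}=\|(\mathcal{F}_c\alpha(x,\chi,\theta))(\cdot,\cdot,j)\|_{L^2(G\times\widehat{G})}$ (using $\alpha^{-j}=(\mathcal{F}_c\alpha)(\cdot,\cdot,j)$). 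Consequently the three displayed conditions on $\alpha$ translate, via Plancherel in the central variable, into the (unweighted) Hilbert--Schmidt integrability $\sum_{j}\int\|\sigma(\cdot,j)\|_{S_2}^2<\infty$ and the two conditions \eqref{2}, while the standing assumption that $\sigma$ satisfies the hypothesis of Theorem \ref{thm3.2} supplies the weighted bound \eqref{1} and the weak continuity needed for injectivity.

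\emph{Sufficiency.} Assuming $\sigma$ has the stated form, I would substitute it into the definition of $T_\sigma$. Because $\rho_j^*(x,\chi,\theta)\rho_j(x,\chi,\theta)=I$, the factor $\rho_j^*$ cancels and $T_\sigma f(x,\chi,\theta)=\sum_{j\in\mathbb{Z}^*}C_{j,G}\,\text{tr}(W^j_{\alpha(x,\chi,\theta)^{-j}}W^j_{f^j})$. Applying $W^j_\lambda W^j_\tau=W^j_{\lambda\times_j\tau}$ and the trace formula of Theorem \ref{4.5}, each summand becomes an explicit integral over $G\times\widehat{G}$, the $C_{j,G}$ cancelling the $(C_{j,G})^{-1}$. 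Writing $f^j$ and $\alpha^{-j}$ as central Fourier coefficients and summing over $j\in\mathbb{Z}^*$ (legitimate by the summability condition $\sum_j\|(\mathcal{F}_c\alpha)(\cdot,\cdot,j)\|_{L^2}<\infty$), the collapse of $\sum_{j\in\mathbb{Z}^*}\omega^j\overline{\eta}^{\,j}=\delta_{\mathbb{T}}-1$ exhibits $T_\sigma$ as the integral operator
\[
(T_\sigma f)(x,\chi,\theta)=\int_{\mathbb{H}(G)} K\big((x,\chi,\theta),(u,\psi,\omega)\big)\,f(u,\psi,\omega)\,d\mu_G(u)\,d\mu_{\widehat{G}}(\psi)\,d\mu_{\mathbb{T}}(\omega),
\]
with $K\big((x,\chi,\theta),(u,\psi,\omega)\big)=\overline{\psi(u)}\,\big[\alpha(x,\chi,\theta)(u^{-1},\psi,\omega)-(\mathcal{F}_c\alpha(x,\chi,\theta))(u^{-1},\psi,0)\big]$. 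Since $u\mapsto u^{-1}$ preserves Haar measure on $G$ and $|\overline{\psi(u)}|=1$, choosing the central-mean-zero representative of $\alpha$ makes the subtracted term vanish and the map $\alpha\mapsto K$ $L^2$-isometric, so that $\|T_\sigma\|_{HS}^2=\int_{\mathbb{H}(G)}\|\alpha(x,\chi,\theta)\|_{L^2(\mathbb{H}(G))}^2\,d\mu_G\,d\mu_{\widehat{G}}\,d\mu_{\mathbb{T}}<\infty$, which is the first condition on $\alpha$; hence $T_\sigma$ is Hilbert--Schmidt.

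\emph{Necessity.} Conversely, if $T_\sigma$ is Hilbert--Schmidt then it is an integral operator with kernel $K\in L^2(\mathbb{H}(G)\times\mathbb{H}(G))$; and since $\widehat{f}(j)=W^j_{f^j}$ depends only on the modes $(f^j)_{j\in\mathbb{Z}^*}$, the operator $T_\sigma$ annihilates functions constant in the central variable, forcing $K$ to have vanishing mean in the second central variable. I would then define $\alpha$ by inverting the relation above, namely $\alpha(x,\chi,\theta)(v,\psi,\omega):=\overline{\psi(v)}\,K\big((x,\chi,\theta),(v^{-1},\psi,\omega)\big)$. This $\alpha$ has vanishing central mean, lies in $L^2(\mathbb{H}(G))$ for almost every $(x,\chi,\theta)$ with $\int\|\alpha(x,\chi,\theta)\|_{L^2(\mathbb{H}(G))}^2=\|K\|_{HS}^2<\infty$, and, through the dictionary above, inherits weak continuity and the sup and sum bounds from the standing hypothesis on $\sigma$. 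Forming $\sigma'(x,\chi,\theta,j):=C_{j,G}\,\rho_j(x,\chi,\theta)W^j_{\alpha(x,\chi,\theta)^{-j}}$ and applying the sufficiency computation to $\sigma'$ recovers exactly the kernel $K$, so $T_{\sigma'}=T_\sigma$. Both symbols satisfy the hypotheses of Theorem \ref{thm3.2}, whose uniqueness conclusion then forces $\sigma=\sigma'$ almost everywhere, which is precisely the asserted form.

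\emph{Main obstacle.} The delicate point throughout is the bookkeeping of the central $\mathbb{T}$-variable and the attendant mean-value-zero phenomenon: the representations $\rho_j$, $j\in\mathbb{Z}^*$, are blind to the $j=0$ central mode, so $\alpha$ is determined only up to its central mean $(\mathcal{F}_c\alpha)(\cdot,\cdot,0)$, and the ``$-1$'' in $\sum_{j\in\mathbb{Z}^*}\omega^j\overline{\eta}^{\,j}=\delta_{\mathbb{T}}-1$ is exactly what cancels the spurious $j=0$ contribution and renders the kernel $\omega$-mean free. I would resolve this by systematically selecting the central-mean-zero representative of $\alpha$, which simultaneously makes $\alpha\mapsto K$ an isometry (yielding the clean Hilbert--Schmidt identity) and makes the reconstruction in the necessity step well posed. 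The only remaining care is in justifying the interchange of the sum over $j$ with the integrals and the trace, which is furnished by the boundedness and summability conditions on $\alpha$ mirroring \eqref{2}.
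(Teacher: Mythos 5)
Your proposal is correct and follows essentially the same route as the paper's own proof: sufficiency by substituting the stated form of $\sigma$, cancelling $\rho_j^*\rho_j=I$, invoking $\widehat{f}(j)=W^j_{f^j}$, the composition rule $W^j_\lambda W^j_\tau=W^j_{\lambda\times_j\tau}$ and the trace formula of Theorem \ref{4.5}, then Parseval in the central variable to exhibit an explicit $L^2$ kernel; necessity by reading $\alpha$ off the Hilbert--Schmidt kernel of $T_\sigma$ and appealing to the uniqueness statement of Theorem \ref{thm3.2}. The one place you genuinely depart from the paper is the bookkeeping of the $j=0$ central mode, and there you are \emph{more} careful than the source: since the sum runs over $\mathbb{Z}^*$ rather than $\mathbb{Z}$, the Parseval collapse produces the correction term $-(\mathcal{F}_c\alpha(x,\chi,\theta))(u^{-1},\psi,0)$ that you record, whereas the paper passes directly from the sum over $j\in\mathbb{Z}^*$ to the integral over $\mathbb{T}$, an identity that is valid only when the central mean vanishes; your device of selecting the central-mean-zero representative of $\alpha$ (legitimate, because the stated form of $\sigma$ only involves the modes $\alpha^{-j}$, $j\neq 0$) is exactly what is needed to make the paper's kernel formula \eqref{eqker} and the isometry $\|T_\sigma\|_{HS}^2=\int\|\alpha(x,\chi,\theta)\|^2_{L^2(\mathbb{H}(G))}$ literally true, and your necessity step also correctly includes the twist $\overline{\psi(v)}$ and the inversion $v\mapsto v^{-1}$ when reconstructing $\alpha$ from the kernel, which the paper's definition of $\alpha$ omits. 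One caveat applies equally to your write-up and to the paper's: in the necessity direction, before Theorem \ref{thm3.2} can be applied to $\sigma-\sigma'$, the reconstructed symbol $\sigma'$ (equivalently the reconstructed $\alpha$) must itself be shown to satisfy the weak continuity and the sup and sum bounds; your claim that these are ``inherited through the dictionary'' is no more of a verification than the paper's ``reversing the argument'', since $K\in L^2(\mathbb{H}(G)\times\mathbb{H}(G))$ alone, being defined only almost everywhere, does not yield them.
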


\begin{proof} We first show the sufficiency part.   Let $f \in C_c(\mathbb{H}(G)).$ Then for all $(x,\chi, \theta) \in \mathbb{H}(G),$
	$$(T_\sigma f)(x, \chi, \theta) = \sum_{j \in \mathbb{Z}^*} tr(\rho_j^*(x, \chi, \theta) \sigma(x, \chi, \theta, j) \widehat{f}(j)).$$
Using the expression of $\sigma$ and the fact that $\widehat{f}(j)= W^j_{f^j},$ we have 
\begin{align*}
(T_\sigma f)(x, \chi, \theta) &= \sum_{j \in \mathbb{Z}^*} C_{j,G} tr(\rho_j^*(x, \chi, \theta) \rho_j(x, \chi, \theta) W^j_{\alpha(x,\chi,\theta)^{-j}}  W^j_{f^j}) \\&= \sum_{j \in \mathbb{Z}^*} C_{j,G} tr(W^j_{\alpha(x,\chi,\theta)^{-j}}  W^j_{f^j}) \\=& \sum_{j \in \mathbb{Z}^*} tr(W^j_{\alpha(x,\chi,\theta)^{-j} \times_j C_{j,G} f^j}).
\end{align*}

By \eqref{eq4.5}, we get 

\begin{align*}
(T_\sigma f)(x, \chi, \theta) &= \sum_{j \in \mathbb{Z}^*} \int_{G \times \widehat{G}}
(\alpha(x, \chi, \theta)^{-j})(x'^{-1}, \chi')\, f^j(x', \chi') \,\chi'(x'^{-1})\, d\mu_G(x')\, d\mu_{\widehat{G}}(\chi') \\&= \sum_{j \in \mathbb{Z}^*} \int_{G \times \widehat{G}}
(\alpha(x, \chi, \theta)^{-j})(x'^{-1}, \chi')\, f^j(x', \chi') \,\chi'(x'^{-1})\, d\mu_G(x')\, d\mu_{\widehat{G}}(\chi') 
\\&= \sum_{j \in \mathbb{Z}^*} \int_{G \times \widehat{G}} (\mathcal{F}_c \alpha(x, \chi, \theta))(x'^{-1}, \chi', j)\, (\mathcal{F}_c f)(x', \chi', j) \chi'(x'^{-1}) \,d\mu_G(x')\, d\mu_{\widehat{G}}(\chi') \\&= \int_{G \times \widehat{G} \times \mathbb{T}} (\alpha(x, \chi, \theta)) (x'^{-1}, \chi', \theta') \, f(x', \chi', \theta')  \chi'(x'^{-1}) d\mu_G(x')\, d\mu_{\widehat{G}}(\chi') d\mu_{\mathbb{T}}(\theta').
\end{align*}
Thus, the kernel of the operator $T_\sigma$ is a function $k$ on $(G \times \widehat{G} \times \mathbb{T}) \times (G \times \widehat{G} \times \mathbb{T})$ given as 
\begin{equation} \label{eqker}
k((x, \chi, \theta), (x', \chi', \theta'))= \chi'(x'^{-1}) \alpha(x, \chi, \theta) (x'^{-1}, \chi', \theta').
\end{equation}

Now, using Fubini theorem 
 \begin{align*}
  &\int_{\mathbb{H}(G) \times \mathbb{H}(G)} |k((x, \chi, \theta), (x', \chi', \theta'))|^2  d\mu_G(x')\, d\mu_{\widehat{G}}(\chi') d\mu_{\mathbb{T}}(\theta')  d\mu_G(x)\, d\mu_{\widehat{G}}(\chi) d\mu_{\mathbb{T}}(\theta)\\&= \int_{\mathbb{H}(G) \times \mathbb{H}(G)} |\chi'(x'^{-1}) \alpha(x, \chi, \theta) (x'^{-1}, \chi', \theta')|^2  d\mu_G(x')\, d\mu_{\widehat{G}}(\chi') d\mu_{\mathbb{T}}(\theta')  d\mu_G(x)\, d\mu_{\widehat{G}}(\chi) d\mu_{\mathbb{T}}(\theta) \\&= \int_{G\times \widehat{G} \times \mathbb{T}}  
  \|\alpha(x, \chi, \theta)\|_{L^2(\mathbb{H}(G))}^2 d\mu_G(x)\, d\mu_{\widehat{G}}(\chi)\, d\mu_{\mathbb{T}}(\theta)<\infty.
 \end{align*}
 Therefore, $T_\sigma:L^2(\mathbb{H}(G)) \rightarrow L^2(\mathbb{H}(G))$ is a Hilbert-Schmidt operator. 
 
 Conversely, assume that $T_\sigma:L^2(\mathbb{H}(G)) \rightarrow L^2(\mathbb{H}(G))$
    is a Hilbert-Schmidt operator. Then there exist a function $\alpha \in L^2(\mathbb{H}(G) \times \mathbb{H}(G))$ such that for $f \in L^2(\mathbb{H}(G))$
    $$ (T_\sigma f)(x, \chi, \theta)= \int_{\mathbb{H}(G) \times \mathbb{H}(G)} \alpha((x, \chi, \theta), (x', \chi', \theta')) f(x', \chi', \theta') d\mu_G(x')\, d\mu_{\widehat{G}}(\chi') d\mu_{\mathbb{T}}(\theta').$$
    
    Define $\alpha :\mathbb{H}(G) \rightarrow L^2(\mathbb{H}(G))$ as 
       $$ \alpha(x, \chi, \theta) (x', \chi', \theta')= \alpha((x, \chi, \theta), (x', \chi', \theta')),\,\,\,\,\, (x, \chi', \theta),\,(x', \chi', \theta') \in \mathbb{H}(G). $$
       
       Then using \eqref{eq4}, we see  
       $$\|\sigma(x, \chi, \theta, j)\|_{S_2}= \|(\mathcal{F}_c \alpha (x, \chi, \theta))(\cdot, \cdot, j)\|_{L^2(G \times \widehat{G})} \,\,\,\,\,\,(x, \chi, \theta, j) \in \mathbb{H}(G) \times \mathbb{Z}^*.  $$
       
       Then reversing the argument in the proof of sufficiency and using Theorem \ref{thm3.2}, the converse is proved.   
\end{proof}

Now we present  following corollary on a trace class pseudo-differential operator on $\mathbb{H}(G)$ and its trace formula. 
\begin{cor} Let $\alpha \in L^2(\mathbb{H}(G) \times \mathbb{H}(G)$ be such that 
	$$\int_{G \times \widehat{G} \times \mathbb{T}} |\alpha((x, \chi, \theta), (x, \chi, \theta))|\,d\mu_G(x)\, d\mu_{\widehat{G}}(\chi) d\mu_{\mathbb{T}}(\theta).$$
	Let $\sigma:\mathbb{H}(G) \times \mathbb{Z}^* \rightarrow \mathcal{B}(L^2(G)) $  be a symbol as in the Theorem \ref{thm5.1}. Then $T_\sigma:L^2(\mathbb{H}(G)) \rightarrow L^2(\mathbb{H}(G))$ is a trace class operator and  
	$$tr(T_\sigma) = \int_{\mathbb{H}(G) } \overline{\chi(x)} \alpha((x, \chi, \theta), (x^{-1}, \chi, \theta))\,\,d\mu_G(x)\, d\mu_{\widehat{G}}(\chi) d\mu_{\mathbb{T}}(\theta). $$
\end{cor}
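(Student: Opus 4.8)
The plan is to read the trace directly off the integral kernel of $T_\sigma$ that was produced in the proof of Theorem \ref{thm5.1}, once we know $T_\sigma$ lands in the trace class. Recall from that proof that, under the present hypotheses, $T_\sigma$ is the integral operator on $L^2(\mathbb{H}(G))$ with kernel
\begin{equation*}
k((x,\chi,\theta),(x',\chi',\theta'))=\chi'(x'^{-1})\,\alpha((x,\chi,\theta),(x'^{-1},\chi',\theta')).
\end{equation*}
Since inversion is measure preserving on the locally compact abelian (hence unimodular) group $G$ and the factor $\chi'(x'^{-1})$ is unimodular, the assignment $\alpha\mapsto k$ is an $L^2$-isometry; thus $k\in L^2(\mathbb{H}(G)\times\mathbb{H}(G))$ and $T_\sigma$ is Hilbert--Schmidt, as already recorded in Theorem \ref{thm5.1}.

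First I would restrict the kernel to the diagonal. Setting $(x',\chi',\theta')=(x,\chi,\theta)$ and using that $\chi\in\widehat{G}$ is unitary, so that $\chi(x^{-1})=\chi(x)^{-1}=\overline{\chi(x)}$, gives
\begin{equation*}
k((x,\chi,\theta),(x,\chi,\theta))=\overline{\chi(x)}\,\alpha((x,\chi,\theta),(x^{-1},\chi,\theta)).
\end{equation*}
By the hypothesis of the corollary this function is integrable over $\mathbb{H}(G)$, and it is precisely the integrand appearing on the right-hand side of the asserted trace formula. Hence the statement reduces to two points: (i) that $T_\sigma$ is trace class, and (ii) that the trace of a trace class integral operator equals the integral of its kernel along the diagonal.

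For (i) and (ii) I would appeal to the standard theory of trace class integral operators on $L^2$ of our (second countable) measure space: a Hilbert--Schmidt integral operator whose kernel is continuous and whose diagonal is integrable lies in $S_1$, with trace $\int_{\mathbb{H}(G)}k((x,\chi,\theta),(x,\chi,\theta))\,d\mu_G\,d\mu_{\widehat{G}}\,d\mu_{\mathbb{T}}$. The continuity of $k$ is inherited from the weak continuity of $\alpha\colon\mathbb{H}(G)\to L^2(\mathbb{H}(G))$ assumed in Theorem \ref{thm5.1} together with the joint continuity of $(x,\chi)\mapsto\chi(x)$, while the integrability of the diagonal is exactly the standing hypothesis. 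Feeding the diagonal computation of the previous paragraph into this formula yields
\begin{equation*}
\operatorname{tr}(T_\sigma)=\int_{\mathbb{H}(G)}\overline{\chi(x)}\,\alpha((x,\chi,\theta),(x^{-1},\chi,\theta))\,d\mu_G(x)\,d\mu_{\widehat{G}}(\chi)\,d\mu_{\mathbb{T}}(\theta),
\end{equation*}
which is the claim.

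I expect step (i)--(ii) to be the only genuine obstacle. Being Hilbert--Schmidt with an integrable diagonal does not by itself force an operator into $S_1$, so the continuity of the kernel (equivalently, the weak continuity of $\alpha$) must really be used to license both the membership in the trace class and the diagonal trace formula; an alternative would be to exhibit a factorization $T_\sigma=UV$ into two Hilbert--Schmidt operators and invoke Theorem \ref{trace}, but that requires producing the factors explicitly. The remaining manipulations — the measure-preserving change of variable under inversion and the identity $\chi(x^{-1})=\overline{\chi(x)}$ — are routine.
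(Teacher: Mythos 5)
You follow the same route as the paper: its entire proof is a one-line appeal to the kernel formula \eqref{eqker} of Theorem \ref{thm5.1}, and your diagonal computation $k((x,\chi,\theta),(x,\chi,\theta))=\overline{\chi(x)}\,\alpha((x,\chi,\theta),(x^{-1},\chi,\theta))$, followed by integration over $\mathbb{H}(G)$, is exactly what that appeal amounts to. The routine parts of your write-up (measure-preserving inversion, $\chi(x^{-1})=\overline{\chi(x)}$) are fine.

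The genuine gap is the ``standard'' result you invoke for your steps (i)--(ii): it is \emph{not} true that a Hilbert--Schmidt integral operator with continuous kernel and integrable diagonal lies in $S_1$. A classical counterexample is convolution on $\mathbb{T}$ by a continuous function $f$ whose Fourier coefficients are not absolutely summable: the kernel $k(x,y)=f(xy^{-1})$ is continuous with constant (hence integrable) diagonal, the operator is Hilbert--Schmidt since $f\in L^2(\mathbb{T})$, but its eigenvalues are the Fourier coefficients of $f$, so it is not trace class. Continuity of the kernel only licenses the direction (ii): \emph{if} $T$ is already known to be trace class and has a continuous kernel, \emph{then} the trace equals the integral of the kernel over the diagonal. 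Membership in $S_1$ must come from elsewhere, and within this paper the only available mechanism is Theorem \ref{trace}: factor $T_\sigma=UV$ with $U,V$ Hilbert--Schmidt, which is precisely what the paper's Section \ref{TrOp} theorem does by assuming $\alpha$ is the composition of two $L^2$ kernels $\alpha_1,\alpha_2$. To be fair, the corollary as stated, and the paper's one-line proof of it, suffer from the same defect --- the stated hypotheses do not by themselves yield trace class --- and you correctly flagged this as the crux; but the resolution you offer does not exist in the generality you need. Note also a mismatch you glossed over: the corollary's hypothesis is integrability of $\alpha$ on the true diagonal $((x,\chi,\theta),(x,\chi,\theta))$, whereas your argument (and the asserted trace formula) requires integrability of the twisted diagonal $(x,\chi,\theta)\mapsto\alpha((x,\chi,\theta),(x^{-1},\chi,\theta))$, which is a different condition.
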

\begin{proof}
	The proof of the corollary follows immediately from the formula \ref{eqker} on the kernel of pseudo-differential operator in Theorem \ref{thm5.1}.  
\end{proof}
\section{Trace class pseudo-differential operators on the abstract Heisenberg groups }\label{TrOp}

\begin{thm}
	Let $\sigma:\mathbb{H}(G) \times \rightarrow S_2$ be a symbol satisfying the hypothesis of Theorem \ref{thm3.2}. Then the psuedo-differential operator $T_\sigma:L^2(\mathbb{H}(G)) \rightarrow L^2(\mathbb{H}(G))$ is a trace class operator if and only if 
	$$\sigma(x, \chi, \theta, j)= C_{j,G} \, \rho_j(x, \chi, \theta) W^j_{\alpha(x, \chi, \theta)^{-j}}\,\,\,\,\, (x, \chi, \theta) \in \mathbb{H}(G),\, j \in \mathbb{Z}^*, $$ where  $\alpha:\mathbb{H}(G) \rightarrow L^2(\mathbb{H}(G))$ is a mapping such that the conditions of Theorem \ref{thm5.1} are satisfied and 
	$$\alpha(x, \chi, \theta)(x', \chi', \theta')= \int_{G\times \widehat{G} \times \mathbb{T}} \alpha_1(x, \chi, \theta)(x'', \chi'', \theta'')\alpha_2(x'', \chi'', \theta'')(x', \chi', \theta')\, d\mu_G(x'') d\mu_{\widehat{G}}(\chi'') d\mu_{\mathbb{T}}(\theta'') $$
	for all $ (x, \chi, \theta),\, (x', \chi', \theta') \in  \mathbb{H}(G);$ here $\alpha_1: \mathbb{H}(G) \rightarrow L^2(\mathbb{H}(G))$ and  $\alpha_2: \mathbb{H}(G) \rightarrow L^2(\mathbb{H}(G))$ are  such that
	
	$$\int_{G \times \widehat{G} \times \mathbb{T}} \|\alpha_i(x, \chi, \theta)\|_{L^2(\mathbb{H}(G))}< \infty\,\,\,\, i=1,2.$$ 
	
	Moreover, if $T_\sigma:L^2(\mathbb{H}(G)) \rightarrow L^2(\mathbb{H}(G))$ is a trace class operator then 
	\begin{align*}
	tr(T_\sigma)& = \int_{G \times \widehat{G} \times \mathbb{T}} \alpha(x, \chi, \theta)(x, \chi, \theta)\,  d\mu_G(x)\, d\mu_{\widehat{G}}(\chi)\, d\mu_{\mathbb{T}}(\theta) \\&= \int_{\mathbb{H}(G) } \int_{\mathbb{H}(G)} \alpha_1(x, \chi, \theta)(x', \chi', \theta')\,\alpha_2(x', \chi', \theta')(x, \chi, \theta)  d\mu_G(x')\, d\mu_{\widehat{G}}(\chi')\, d\mu_{\mathbb{T}}(\theta')\,d\mu_G(x)\, d\mu_{\widehat{G}}(\chi)\, d\mu_{\mathbb{T}}(\theta). 
	\end{align*}
	 
\end{thm}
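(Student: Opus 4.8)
The plan is to deduce the statement from the Hilbert--Schmidt characterisation already established in Theorem~\ref{thm5.1}, combined with the factorisation criterion recalled in Theorem~\ref{trace} (a trace class operator is exactly a product of two Hilbert--Schmidt operators). The whole argument is the $\mathbb{H}(G)$-level analogue of the trace class $j$-Weyl transform results, Theorem~\ref{4.1} and Theorem~\ref{4.5}, and I would organise it the same way: an ``if and only if'' obtained by passing between operators and their integral kernels, followed by a trace computation.

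For sufficiency I would begin with a symbol $\sigma$ of the asserted form whose function $\alpha$ factors as the displayed integral composition $\alpha=\alpha_1\circ\alpha_2$, with $\alpha_1,\alpha_2$ satisfying the integrability hypotheses. I would introduce the integral operators $U$ and $V$ on $L^2(\mathbb{H}(G))$ whose kernels are built from $\alpha_1$ and $\alpha_2$ through the same recipe~\eqref{eqker} that ties $\alpha$ to the kernel of $T_\sigma$; the square-integrability of $\alpha_1,\alpha_2$ makes each of $U,V$ Hilbert--Schmidt, exactly as in the sufficiency half of Theorem~\ref{thm5.1}. The key identity is that the composition formula for $\alpha$ is precisely the kernel-composition law $k_{UV}(z,z')=\int_{\mathbb{H}(G)}k_U(z,w)\,k_V(w,z')\,dw$ read through~\eqref{eqker}, so that $T_\sigma=UV$; Theorem~\ref{trace} then makes $T_\sigma$ trace class. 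For necessity I would reverse this: assuming $T_\sigma$ trace class, Theorem~\ref{trace} yields Hilbert--Schmidt $U,V$ with $T_\sigma=UV$. Since every Hilbert--Schmidt operator on $L^2(\mathbb{H}(G))$ is an integral operator with $L^2$ kernel, and the assignment $\beta\mapsto k_\beta$ from~\eqref{eqker} is a bijection onto such kernels, $U$ and $V$ determine functions $\alpha_1,\alpha_2:\mathbb{H}(G)\to L^2(\mathbb{H}(G))$ with the required integrability. Reading the kernel of $UV=T_\sigma$ through~\eqref{eqker} once more recovers the integral relation $\alpha=\alpha_1\circ\alpha_2$, while Theorem~\ref{thm5.1} certifies that $\sigma$ has the claimed form.

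The trace formula I would obtain in two steps. Using that a trace class integral operator has trace equal to the integral of its kernel over the diagonal, \eqref{eqker} together with the composition formula gives the first expression $\int_{G\times\widehat{G}\times\mathbb{T}}\alpha(x,\chi,\theta)(x,\chi,\theta)\,d\mu_G\,d\mu_{\widehat{G}}\,d\mu_{\mathbb{T}}$; inserting $\alpha=\alpha_1\circ\alpha_2$ and applying Fubini then produces the symmetric double integral in $\alpha_1,\alpha_2$, which is the standard identity $tr(UV)=\iint k_U(z,w)\,k_V(w,z)\,dw\,dz$ for the trace of a product of Hilbert--Schmidt operators.

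The step I expect to be most delicate is the bookkeeping: keeping the character factors $\chi'(x'^{-1})$ and the group inversions of~\eqref{eqker} consistent as one moves between the kernels of $U$, $V$, $UV$ and the functions $\alpha_1,\alpha_2,\alpha$, since these twists are applied at each stage of the correspondence and must cancel correctly for the clean composition law to emerge. A secondary technical point is verifying that the diagonal integral defining the trace converges absolutely; this is where the unsquared hypotheses $\int_{G\times\widehat{G}\times\mathbb{T}}\|\alpha_i(x,\chi,\theta)\|_{L^2(\mathbb{H}(G))}\,d\mu<\infty$ enter, through a Cauchy--Schwarz estimate on the inner integral defining $\alpha(x,\chi,\theta)(x,\chi,\theta)$.
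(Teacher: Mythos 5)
Your proposal is correct and follows essentially the same route as the paper: the paper's entire proof is the one-sentence remark that the theorem follows from Theorem~\ref{thm5.1} together with the fact (Theorem~\ref{trace}) that an operator is trace class if and only if it factors as a product of two Hilbert--Schmidt operators, which is precisely your plan. Your unpacking --- building $U,V$ from $\alpha_1,\alpha_2$ via the kernel recipe~\eqref{eqker}, identifying the composition law for $\alpha$ with the kernel-composition law for $UV$, and obtaining the trace via $tr(UV)=\iint k_U(z,w)k_V(w,z)\,dw\,dz$ --- supplies more detail than the paper does, including the twist/inversion bookkeeping in~\eqref{eqker} that the paper's statement itself glosses over.
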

\begin{proof}
	 The proof of this theorem follows from Theorem \ref{thm5.1} and the fact the every trace class operator is the product of two Hilbert-Schmidt operators.
\end{proof}

\section*{Acknowledgment}

Vishvesh Kumar wants to thank Prof. V. Muruganandam for his support and encouragement. He also thanks the National Institute of Science Education and
Research, the Department of Atomic Energy, Government of India, for providing excellent research facility.


\begin{thebibliography}{100}
	
	\normalsize
	\baselineskip=17pt
	
	\bibitem{Ra} S. Arati and R. Radha, Frames and Riesz bases for shift invariant spaces on the abstract Heisenberg group., \emph{ Indagationes Mathematicae}, 30(1) (2019) 106-127.
\bibitem{Dasgupta} A. Dasgupta and M. W. Wong, Hilbert-Schmidt and trace class pseudo-differential operators on Heisenberg group., \emph{J. Pseudo-Diffr. Oper. Appl.} 4 (2013) 345-359.
\bibitem{Dg }A. Dasgupta, M.W. Wong, Weyl transforms for H-type groups. J. Pseudo-Differ. Oper. Appl. 6, 11–19 (2015)
\bibitem{DW} A. Dasgupta and M. W. Wong,  Pseudo-Differential Operators on the Affine Group., \emph{ Pseudo-Differential Operators: Groups, Geometry and Applications. Trends in Mathematics. Birkhäuser, Cham}, 2017,  1-14.
\bibitem{Fo1} G. B. Folland, Harmonic Analysis in Phase Space, \emph{Princeton University Press}, Princeton, New Jersey 1989. 
\bibitem{Fo2} G. B. Folland, A course in abstract Harmonic Analysis, \emph{Studies in Advanced Mathematics,} CRC Press, Boca Raton, Florida, 1995.
\bibitem{Fis} V. Fischer and M. Ruzhansky, Quantization on nilpotent Lie groups., \emph{Progress in Mathematics}, Vol. 314, Birkhauser, 2016. xiii+557pp.
\bibitem{Man} M. Mantoiu, M. Ruzhansky, Pseudo-differential operators, Wigner transform and Weyl systems on type I locally compact groups,\emph{ Doc. Math.,} 22 (2017), 1539-1592.
\bibitem{Mol} S. Molahajloo and K. L. Wong, Pseudo-differential operators on finite abelian groups.,
\emph{J. Pseudo-Differ. Oper. Appl.} 6 (2015), 1–9.
\bibitem{Mol1} S. Molahajloo and M. W. Wong, Pseudo-differential operators on  $\mathbb{S}^{1}.$., \emph{ New Develop-
ments in Pseudo-Differential Operators, Operator Theory: Advances and Applications} 189, 2009, 297–306.



\bibitem{Resi}	M. Reed and B. Simon, Methods of Modern Mathematical Physics I., \emph{Functional Analysis, Revised and Enlarged Edition,} Academic Press, 1980.
	
	\bibitem{Kumar} Vishvesh Kumar, Pseudo-differential operators on homogeneous spaces of compact and Hausdorff groups., \emph{Forum Mathematicum}, (2018). .
	\bibitem{VW} Vishvesh Kumar and M. W. Wong, $C^*$-algebras, $H^*$-algebras and trace ideals of pseudo-differential operators on locally compact, Hausdorff and abelian groups., \emph{J. Pseudo-Diffr. Oper. Appl.} (2019). 
	
	\bibitem{Shr} R. Radha and N. Shravan Kumar, Groups, Weyl transform and Weyl multipliers associated with  locally compact abelian groups., \emph{J. Pseudo-Diffr. Oper. Appl.} 9(2) (2018) 229-245.
	\bibitem{RuV} M. Ruzhansky and V. Turunen, Pseudo-differential Operators and Symmetries: Background Analysis and Advanced Topics., \emph{Birkhauser}, Basel, 2010. 724pp. 
	\bibitem{Wei} A. Weil, Sur certains groupes d'oprateurs unitaires., \emph{Acta. Math.} 111 (1964) 143-211.
	\bibitem{Wong} M. W. Wong,  An Introduction to Pseudo-Differential Operators., Third Edition, \emph{World Scientific}, 2014.
\bibitem{Wo}	M. W. Wong, Discrete Fourier Analysis, \emph{Birkh\"{a}user}, Basel, 2011.
	
\end{thebibliography}
\end{document}